\newtheorem{ass}{Assumption}
\def\cal{\mathcal}
\def\Pr{\mathbb{P}}
\def \R{I\!\!R}
\begin{document}
\title[decay/surge]{On decay-surge population models}
\author{Branda Goncalves, Thierry Huillet and Eva L\"ocherbach}
\address{B. Goncalves and T. Huillet: Laboratoire de Physique Th\'{e}orique et
Mod\'{e}lisation, CY Cergy Paris Universit\'{e}, CNRS UMR-8089, 2 avenue
Adolphe-Chauvin, 95302 Cergy-Pontoise, France \\
E-mails: branda.goncalves@outlook.fr, Thierry.Huillet@cyu.fr  \\
E. L\"ocherbach: SAMM, Statistique, Analyse et Mod\'elisation
Multidisciplinaire, Universit\'e Paris 1 Panth\'eon-Sorbonne, EA 4543 et FR
FP2M 2036 CNRS, France.\\
E-mail: eva.locherbach@univ-paris1.fr}
\maketitle
\begin{abstract}
We consider continuous space-time decay-surge population models which are 
semi-stochastic processes for which deterministically declining populations, 
bound to fade away, are reinvigorated at random times by bursts or 
surges of random sizes. In a particular separable framework (in a sense made precise below) we provide explicit formulae for the scale
(or harmonic) function and the speed measure of the process. The behavior of the scale function at infinity allows to formulate conditions under 
which such processes either explode or are transient at 
infinity, or Harris recurrent.  A description of the structures of both the 
discrete-time embedded chain and extreme record chain of such 
continuous-time processes is supplied.

\textbf{Keywords}: declining population models, surge processes, PDMP, Hawkes processes, scale function, Harris recurrence. 

\textbf{AMS Classification 2010:} 60J75, 92D25.
\end{abstract}

\section{Introduction}
This paper deals with decay-surge population models where a deterministically declining evolution following 
some nonlinear flow is interrupted by bursts of 
random sizes occurring at random times. Decay-surge models are natural models of many physical and biological phenomena, including the evolution of aging and declining populations which are reinvigorated by immigration, the height of the membrane potential of a neuron decreasing in between successive spikes of its presynaptic neurons due to leakage effects and jumping upwards after the action potential emission of its presynaptic partners, work processes in single-server queueing systems as from Cohen (1982)... Our preferred physical image will be the one of aging populations subject to immigration.

Decay-surge models have been extensively studied in the literature, see among others Eliazar and Klafter (2006), and Harrison and Resnick (1976) and (1978). Most studies concentrate however  on non-Markovian models such as shot noise or Hawkes processes, where superpositions of overlapping decaying populations are considered, see Br\'emaud and Massouli\'e (1996), Eliazar and Klafter (2007) and (2009), Huillet (2021), Kella and Stadje (2001), Kella (2009) and Resnick and Tweedie (1982).

Inspired by storage processes for dams, the papers of Resnick and Tweedie (1982), Kella (2009), \c{C}inlar and Pinsky (1971), Asmussen and Kella (1996), Boxma et al (2011),  Boxma at al (2006), Harrison and Resnick (1976), Brockwell et al (1982)  are mostly concerned with growth-collapse models when growth is from \emph{stochastic} additive inputs such as compound Poisson or L\'{e}vy processes or renewal processes. Here, the water level of a dam decreasing deterministically according to some fixed water release program is subject to sudden uprises due to rain or flood. Growth-collapse models are also very relevant in the Burridge-Knopoff stress-release model of earthquakes and continental drift, as from Carlson et al (1994), and in stick-slip models of interfacial friction, as from Richetti et al (2001).
As we shall see, growth-collapse models are in some sense 'dual' to decay-surge models.

In contrast with these last papers, and as in the works of Eliazar and Klafter (2006), (2007) and (2009), we concentrate in the present work on a \emph{deterministic} and continuous decay motion in between successive surges, described by a nonlinear  flow, determining the decay rate of the population and given by
$$ x_t (x) = x - \int_0^t \alpha ( x_s (x)) ds ,\; t \geq 0, \;  x_0 ( x) = x \geq 0.$$
In our process, upward jumps (surges) occur with state dependent rate $ \beta ( x), $ when the current state of the process is $x .$ When a jump occurs, the present size of the population $x$ is replaced by a new random value $ Y(x) > x, $ distributed according to some transition kernel $ K ( x, dy ) , y \geq x .$  

This leads to the study of a quite general family of 
continuous-time piecewise deterministic Markov processes (PDMP's) $X_t(x) $ representing the size of the population at time $t$ when 
started from the initial value $x \geq  0 .$ See Davis (1984). The infinitesimal generator of this process is given for smooth test functions by 
$$ {\mathcal G} u (x) = - \alpha (x) u'(x) + \beta ( x) \int_{ ( x, \infty) } K ( x, dy ) [ u(y) - u(x) ] , \; x \geq 0, $$
under suitable conditions on the parameters $ \alpha, \beta $ and $ K(x, dy ) $ of the process. In the sequel we focus on the study of separable kernels $K (x, dy ) $ where for each $ 0 \le x \le y , $  
\begin{equation}\label{eq:sep}
\int_{(y, \infty )} K( x,dz ) {=}\frac{k\left( y\right) }{k\left( x\right) }%
\text{,}
\end{equation}
for some positive non-increasing function  $k: [0, \infty ) \to [ 0, \infty ]  $ which is continuous on $ (0, \infty).$ 

The present paper proposes a precise characterization of the probabilistic properties of the above process in this separable frame. Supposing that $ \alpha (x) $ and $ \beta ( x) $ are continuous and positive on $ (0, \infty ) , $ the main ingredient of our study is the function 
\begin{equation}\label{eq:Gamma}
\Gamma (x) = \int_1^x \gamma ( y) dy, \mbox{ where } \gamma (y) = \beta (y) / \alpha (y) ,\;  y, x \geq 0 .
\end{equation}
Supposing that $\Gamma (\cdot ) $ is a {\bf space transform,} that is, $ \Gamma ( 0 ) = - \infty $ and $ \Gamma ( \infty ) =  \infty, $ we show that 
\begin{enumerate}
\item[1.]
Starting from some strictly positive initial value $ x > 0, $ the process does not get extinct (does not hit $0$) in finite time almost surely (Proposition \ref{prop:extinction}). In particular, imposing additionally $ k (0 ) < \infty ,$ we can study the process  in restriction to the state space $ (0, \infty). $ This is what we do in the sequel.
\item[2.]
The function 
\begin{equation}
 s(x) = \int_{1}^{x}\gamma (y)e^{-\Gamma (y)}/k(y)dy, \;  x \geq 0 , 
\end{equation} 
is a {\bf scale function} of the process, that is, solves $ {\mathcal G} s (x) = 0$ (Proposition \ref{prop:scale}). It is always strictly increasing and satisfies $ s (0) = -\infty  $ under our assumptions. But it might not be a space transform, that is, $ s( \infty ) $ can take finite values. 
\item[3.]
This scale function plays a key role in the understanding 
of the exit probabilities of the process and yields conditions under 
which the process either explodes in finite time or is transient at 
infinity. More precisely, if  $ s ( \infty ) < \infty ,$ we have the following explicit formula for the  exit probabilities. For any $ 0 < a < x < b , $
\begin{equation}\label{eq:exit1}
 {\mathbf{P}} ( X_t  \mbox{ enters  $(0, a] $ before entering $ [b, \infty ) $  } |X_0 = x )  =  \frac{s(x) - s(a) }{s(b) - s(a) }.
\end{equation} 
(Proposition \ref{prop:exit}).

Taking $ b \to \infty $ in the above formula, we deduce from this that $ s( \infty ) < \infty $ implies either that the process explodes in finite time (possesses an infinite number of jumps within some finite time interval) or  that it is transient at infinity. 

Due to the asymmetric dynamic of the process (continuous motion downwards and jumps upwards such that entering the interval $ [b, \infty ) $ starting from $x < b $ always happens by a jump), \eqref{eq:exit1} does not hold if $ s (\infty ) = \infty .$
\item[4.]
Imposing additionally that $ \beta ( 0 ) > 0 ,$ {\bf Harris recurrence (positive or null)} of the process is equivalent to the fact that $s$ is a space transform, that is, $ s( \infty ) = \infty $ (Theorem \ref{theo:harris}). 

In this case, up to constant multiples, the unique invariant measure of the process possesses a Lebesgue density (speed density) given by 
$$ \pi ( x) = \frac{ k (x) e^{ \Gamma ( x) } }{ \alpha ( x) }, x > 0 .$$ 
More precisely, we show how the scale function can be used to obtain 
Foster-Lyapunov criteria in the spirit of Meyn and Tweedie (1993) implying the non-explosion of the process together with its recurrence under 
additional irreducibility properties. Additional conditions, making use of the speed measure, under 
which first moments of hitting times are finite, are also supplied in 
this setup.
\end{enumerate} 

{\bf Organization of the paper.} In Section 2, we introduce our model and state some first results. Most importantly, we establish a simple relationship between decay-surge models and growth-collapse models as studied in Goncalves, 
Huillet and L\"ocherbach (2021) that allows us to obtain explicit representations of the law of the first jump time and of the associated speed measure without any further study. Section 3 is devoted to the proof of the existence of the scale function (Proposition \ref{prop:scale}) together with the study of first moments of hitting times which are shown to be finite if the speed density is integrable at $ + \infty $ (Proposition \ref{prop:hitting}). Section 4 then collects our main results. If the scale function is a space transform, it can be naturally transformed into a Lyapunov function in the sense of Meyn and Tweedie (1993) such that the process does not explode in finite time and comes back to certain compact sets infinitely often (Proposition \ref{prop:MT}). Using the regularity produced by the jump heights according to the absolutely continuous transition kernel $ K( x, dy ), $ Theorem \ref{theo:petite} establishes then a local Doeblin lower bound on the transition operator of the process - a key ingredient to prove Harris recurrence which is our main result Theorem \ref{theo:harris}.  Several examples are supplied including one related to linear Hawkes processes and to shot-noise processes. In a last part of the work, we focus on the embedded chain of  the process, sampled at the jump times, 
which, in addition to its fundamental relevance, is easily amenable to simulations. Following Adke (1993), we also draw the attention to the structure of 
the extreme record chain of  $X_t(x) $, allowing in particular to derive 
the distribution of the first upper record time and overshoot value, as 
a level crossing time and value. This study is motivated by the understanding of the time of the first crossing of some high population level and the amount of the corresponding overshoot, as, besides extinction, populations can face overcrowding.

\section{The model, some first results and a useful duality property}
We study population decay models with random surges described by a Piecewise Deterministic Markov Process (PDMP) $ X_t  , t \geq 0,$ starting from some initial value $ x \geq 0$ at time $0$ and taking values in $ [0, \infty ) .$ The main ingredients of our model are 
\begin{enumerate}
\item[1.]
The drift function $\alpha (x) .$ We suppose that $ \alpha : 
[ 0,\infty ) \to [0, \infty )  $ is continuous, with $ \alpha ( x) > 0 $ for all $ x > 0. $ In between successive jumps, the process follows the decaying dynamic 
\begin{equation}\label{eq:detdyn}
\overset{.}{x}_{t} (x) =-\alpha \left(
x_{t} (x) \right) , x_{0} (x) =x \geq 0.
\end{equation}
\item[2.]
The jump rate function $ \beta ( x) .$ We suppose that $ \beta : {( 0 , \infty)} \to [0, \infty ) $ is continuous and
$ \beta ( x) > 0 $ for all $ x > 0.$ 
\item[3.]
The jump kernel $K (x, dy ).  $ This is a transition kernel from $ [0, \infty) $ to $[0, \infty ) $ such that for any $ x >  0, $ $ K ( x, [x, \infty) ) = 1 .$ Writing $ K(x, y ) = \int_{(y, \infty) } K (x, dz), $ we suppose that $K(x, y ) $ is jointly continuous
in $x$ and $y.$

\end{enumerate}

In between successive jumps, the population size follows the deterministic flow $ x_t (x) $ given in \eqref{eq:detdyn}.  
For any $0\leq a<x$, the integral 
\begin{equation}\label{eq:timetoa}
t_{a}\left( x\right) :=\int_{a}^{x}\frac{dy}{\alpha \left( y\right) }
\end{equation}
is the time for the flow to hit $a$ starting from $x.$ In particular, starting from $ x > 0, $ the flow reaches $0$ after
some time $t_0\left( x\right) =\int_{0}^{x}\frac{dy}{\alpha \left(
y\right) }\leq \infty $. We refer to \cite{DSGHL} for a variety of examples of such decaying flows that can hit zero in finite time or not.

Jumps occur at state dependent
rate $\beta \left( x\right) .$ At the jump times, the size of the population
grows by a random amount $\Delta \left( X_{t-}\right) >0$ of its current
size $X_{t-}.$ Writing $Y\left( X_{t-}\right) :=X_{t-}+\Delta
\left( X_{t-}\right) $ for the position of the process right after its jump, $ Y ( X_{t-}) $ is distributed according to $ K ( X_{t- } , d y ).$  

Up to the next jump time, $X_{t}$ then decays again, following the
deterministic dynamics \eqref{eq:detdyn}, started at the new value $Y\left( X_{t-}\right) :=X_{t-}+\Delta
\left( X_{t-}\right) $.

We are thus led to consider the PDMP $ X_t$ 
 with state-space $\left[ 0,\infty \right) $ solving 
\begin{equation}
dX_{t}=-\alpha \left( X_{t}\right) dt+\Delta \left( X_{t-}\right)
\int_{0}^{\infty }\mathbf{1}_{\left\{ r\leq \beta \left( X_{t-}\left(
x\right) \right) \right\} }M\left( dt,dr\right) ,\text{ }X_{0}=x,  \label{C0}
\end{equation}
where $M\left( dt,dr\right) $ is a Poisson measure on $\left[ 0,\infty
\right) \times \left[ 0,\infty \right) $. Taking $dt\ll 1$ be the system's
scale, this dynamics means alternatively that we have transitions
\begin{eqnarray*}
X_{t-} &=&x\rightarrow x-\alpha \left( x\right) dt\text{ with probability }%
1-\beta \left( x\right) dt \\
X_{t-} &=&x\rightarrow x+\Delta \left( x\right) \text{ with probability }%
\beta \left( x\right) dt.
\end{eqnarray*}

It is a nonlinear version of the Langevin equation with jumps.

\subsection{Discussion of the jump kernel}
We have 
\begin{equation*}
\mathbf{P}\left( Y\left( x\right) >y\mid X_{t-}=x\right) =K\left( x,y\right) = \int_{ ( y, \infty ) } K (x, dz).
\end{equation*}
Clearly $K\left( x,y\right) $\ is a non-increasing function of $y$\ for all $%
y\geq x$ satisfying $K\left( x,y\right) =1$\ for all $y<x$\textbf{. }By continuity, this implies that $%
K\left( x,x\right) =1$ such that the law of $\Delta \left( x\right) $ has
no atom at $0.$

In the sequel we concentrate on the separable case 
\begin{equation}\label{eq:sep}
K\left( x,y\right) {=}\frac{k\left( y\right) }{k\left( x\right) }%
\text{,}
\end{equation}
where $k: [0, \infty ) \to [ 0, \infty ]  $ is any positive non-increasing function. In what follows we suppose that $k$ is continuous and finite on $ (0, \infty). $ 

Fix $z>0$ and assume $y=x+z.$ Then  
$$
{\bf P}\left( Y\left( x\right) >y\right) =\frac{k\left( x+z\right) }{k\left(
x\right) } .
$$
Depending on $k\left( x\right) , $ this probability can be a
decreasing or an increasing function of $x$ for each $z.$

\begin{example}
Suppose $k\left( x\right) =e^{-x^{\alpha }}${\bf , }$\alpha
>0,${\bf \ }$x\geq 0$ (a Weibull distribution).

- If $\alpha <1,$ then $\partial _{x}K (x, x + z ) >0,$ so that the larger $%
x,$ the larger  ${\bf P}\left( Y\left( x\right) >x+z\right) .$ In other words, if 
the population stays high, the probability of a large number of immigrants
will be enhanced. There is a positive feedback of $x$ on $\Delta
\left( x\right) , $ translating a herd effect.

- If $\alpha =1,$ then $\partial _{x}K(x, x+z) =0$ and there is no
feedback of $x$ on the number of immigrants, which is then exponentially
distributed.

- If $\alpha >1,$ then $\partial _{x}K (x, x+z) <0$ and the larger  $x,$ the smaller the probability ${\bf P}\left( Y\left( x\right)
>x+z\right) .$ In other words, if the population stays high, the probability of a large
number of immigrants will be reduced. There is a negative feedback of  $x$
on $\Delta \left( x\right) .$ 
\end{example}

\begin{example}
{\bf The case $k\left( 0\right) <\infty . $} Without loss of generality, we may take $k\left( 0\right) =1.$
Assume that $k\left( x\right) ={\bf P}\left( Z>x\right) $ for some proper random variable $Z>0$ and that 
\[
Y\left( x\right) \stackrel{d}{=}Z\mid Z>x 
\]
so that $Y\left( x\right) $ is amenable to the truncation of $Z$ above $x.$
Thus
\[
{\bf P}\left( Y\left( X_{t-}\right) >y\mid X_{t-}=x\right) =\frac{{\bf P}%
\left( Z>y,Z>x\right) }{{\bf P}\left( Z>x\right) }=\frac{k\left( y\right) }{%
k\left( x\right) },\text{ for }y>x .
\]
A particular (exponential) choice is 
\[
k\left( x\right) =e^{-\theta x},\text{ }\theta >0, 
\]
with ${\bf P}\left( Y\left( X_{t-}\right) >y\mid X_{t-}=x\right) =e^{-\theta
\left( y-x\right) }$ depending only on $y-x$. Another possible one is
(Pareto): $k\left( x\right) =\left( 1+x\right) ^{-c},$ $c>0$.

Note $K\left( 0,y\right) =k\left( y\right) >0$ for all $y>0,$ and $k\left(
y\right) $ turns out to be the cpdf of a jump above $y$, starting
from $0$: state $0$ is {\bf reflecting}.

{\bf The case $ k (0 ) = \infty .$}
Consider $k\left( x\right)
=\int_{x}^{\infty }\mu \left( Z\in dy\right) $ for some positive Radon
measure $\mu $ with infinite total mass. In this case, 
\[
{\bf P}\left( Y\left( X_{t-}\right) >y\mid X_{t-}=x\right) =\frac{\mu \left(
Z>y,Z>x\right) }{\mu \left( Z>x\right) }=\frac{k\left( y\right) }{k\left(
x\right) },\text{ for }y>x. 
\]

Now, $K\left( 0,y\right) =0$ for all $y>0$ and state $0$ becomes 
{\bf attracting}. An example is $k\left( x\right) =x^{-c},$ $c>0,$ which is
not a cpdf.

The ratio $k\left( y\right) /k\left( x\right) $ is thus the conditional
probability that a jump is greater than the level $y$ given that it did
occur and that it is greater than the level $x$, see Eq. (1) of Eliazar and Klafter (2009) for a similar choice.

Our motivation for choosing the separable form $K\left(
x,y\right) =k\left( y\right) /k\left( x\right) $ is that it accounts for the
possibility of having state $0$ either absorbing or reflecting for upwards
jumps launched from $0$ and also that it can account for both a
negative or positive feedback of the current population size on the number
of incoming immigrants.
\end{example}

\begin{example}
One can think of many other important and natural choices of $K\left( x,y\right) ,$ not in
the separable class, among which those for which 
\[
K\left( x,dy\right) =\delta _{Vx}\left( dy\right) 
\]
for some random variable $V>1.$ For this class of kernels, state $0$ is {\bf always}
attracting. For example, choosing $V=1+E$ where:

1/ $E$ is exponentially distributed with pdf $e^{-\theta x},$ $Y\left(
x\right) =Vx$ yields 
\[
{\bf P}\left( Y\left( x\right) >y\mid X_{-}=x\right) =K\left( x,y\right) =%
{\bf P}\left( \left( 1+E\right) x>y\right) =e^{-\theta \left( \frac{y}{x}%
-1\right) }, 
\]

2/ $E$ is Pareto distributed with pdf $\left( 1+x\right) ^{-c},$ $c>0,$ $%
Y\left( x\right) =Vx$ yields 
\[
{\bf P}\left( Y\left( x\right) >y\mid X_{-}=x\right) =K\left( x,y\right) =%
{\bf P}\left( \left( 1+E\right) x>y\right) =\left( y/x\right) ^{-c},
\]
both with $K\left( 0,y\right) =0.$ When \\
3/: $V\sim \delta _{v},$ $v>1$, then 
$K\left( x,y\right) ={\bf 1}\left( y/x\leq v\right) .$ The three kernels
depend on $y/x.$

Note that in all three cases, $\partial _{x}K (x, x+z) >0, $ so that the larger %
$x,$ the larger ${\bf P}\left( Y\left( x\right) >x+z\right). $ If the population stays high, the probability of a large number of
immigrants will be enhanced. There is a positive feedback of $x$  on $%
\Delta \left( x\right) ,$ translating a herd effect.
\end{example}

\begin{remark}
A consequence of the separability condition of $K$ is the following. Consider a
Markov sequence of after-jump positions defined recursively by $%
Z_{n}=Y\left( Z_{n-1}\right) $, $Z_{0}=x_{0}$. With $x_{m}>x_{m-1}$, we have 
\[
{\bf P}\left( Z_{m}>x_{m}\mid Z_{m-1}=x_{m-1}\right) =K\left(
x_{m-1},x_{m}\right) \text{, }m=1,...,n,
\]
so that, with $x_{0}<x_{1}<...<x_{n}$, and under the separability condition
on $K$, the product
\[
\prod_{m=1}^{n}{\bf P}\left( Z_{m}>x_{m}\mid Z_{m-1}=x_{m-1}\right)
=\prod_{m=1}^{n}K\left( x_{m-1},x_{m}\right) =\prod_{m=1}^{n}\frac{k\left(
x_{m}\right) }{k\left( x_{m-1}\right) }=\frac{k\left( x_{n}\right) }{k\left(
x_{0}\right) }
\]
only depends on the initial and terminal states $\left( x_{0},x_{n}\right) $
and not on the full path $\left( x_{0},...,x_{n}\right) .$ 
\end{remark}

\subsection{The infinitesimal generator} 
In what follows we always work with separable kernels. Moreover, we write $X_t ( x) $ for the process given in \eqref{C0} to emphasize the dependence on the starting point $ x , $ that is, $X_t (x) $ designs the process with the above dynamics \eqref{C0} and satisfying $ X_0 ( x) = x .$ 
If the value of the starting point $x$ is not important, we shall also write $ X_t $ instead of $ X_t ( x) .$ 

Under the separability condition, the infinitesimal generator of $X_{t}$ acting on bounded smooth test functions $u$ takes the following simple form 
\begin{equation}\label{eq:generator}
\left( \mathcal{G}u\right) \left( x\right) =-\alpha \left( x\right)
u^{\prime }\left( x\right) +\frac{\beta \left( x\right) }{k\left( x\right) }%
\int_{x}^{\infty }k\left( y\right) u^{\prime }\left( y\right) dy , x \geq 0. 
\end{equation}

\begin{remark}
In Eliazar and Klafter (2006), a particular scale-free version of
decay-surge models with $\alpha \left( x\right) \propto x^{a}$, $\beta
\left( x\right) \propto x^{b}$ and $k\left( x\right) \propto x^{-c}$, $c>0$,
has been investigated.
\end{remark}

\begin{remark}
If $x_{t}$ goes extinct in finite time $t_{0}(x) < \infty ,$ since $x_{t}$ is
supposed to represent the size of some population, we need to impose $x_{t}=0
$ for $t\geq t_0(x),$ forcing state $0$ to be absorbing. From this time
on, $X_{t}$ can re-enter the positive orthant if there is a positive
probability to move from $0$ to a positive state meaning $ k (0 ) < \infty $ and $\beta \left( 0\right) >0.$ 
In such a case, the first
time $X_{t}$ hits state $0$ is only a first local extinction time the
expected value of which needs to be estimated. The question of the time
elapsed between consecutive local extinction times (excursions) also arises.

On the contrary, for situations for which $k(0)=\mathbf{\infty }$ or $\beta
\left( 0\right) =0,$ the first time $X_{t}$ hits state $0$ will be a global
extinction time.
\end{remark}

\subsection{Relation between decay-surge and growth-collapse processes}
In this subsection, we exhibit a natural relationship between decay-surge population models, as studied here, and growth-collapse models as developed in Boxma et al (2006), Goncalves et al (2021), Gripenberg (1983) and Hanson and Tuckwell (1978). Growth-collapse models describe deterministic population growth where at random jump times the size of the population undergoes a catastrophe reducing its current size to a random fraction of it. More precisely, the generator of a growth-collapse process, having parameters $ ( \tilde \alpha, \tilde \beta, \tilde h ) ,$ is given for all smooth test functions by 
\begin{equation}\label{eq:Gsep}
(\tilde {\mathcal G} u)  (x) = \tilde \alpha (x) u' ( x) - \tilde \beta ( x)/\tilde h(x)  \int_0^x u'(y)\tilde h(y) dy   , x \geq 0.
\end{equation} 
In the above formula, $ \tilde \alpha, \tilde \beta $ are continuous and positive functions on $ (0, \infty), $ and $ \tilde h $ is positive and non-decreasing on $ (0, \infty).$  

In what follows, consider a decay-surge process $X_{t}$ defined by the
triple $\left( \alpha ,\beta ,k\right) $ and let $\widetilde{X}_{t}=1/X_{t}.$

\begin{proposition}
The process $\widetilde{X}_{t}$  is a growth-collapse
process as studied in \cite{GHL} with triple $\left( \widetilde{\alpha },%
\widetilde{\beta },\widetilde{h}\right) $ given by 
$$
\widetilde{\alpha }\left( x\right)  =x^{2}\alpha (1/x) ,
\widetilde{\beta }(x) =\beta (1/x)  \mbox{ and } 
\widetilde{h}(x) =k(1/x), \; x > 0.
$$
\end{proposition}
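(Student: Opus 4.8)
The plan is to prove this by a direct change of variables in the generator, verifying that if $u$ is a smooth test function and $X_t$ is the decay-surge process with triple $(\alpha,\beta,k)$, then the generator of $\widetilde X_t = 1/X_t$ applied to $u$ coincides with $\widetilde{\mathcal G} u$ for the claimed triple. Since $X_t$ is a PDMP, it suffices to check the action of the generator separately on the deterministic drift part and on the jump part, and then match terms with \eqref{eq:Gsep}.

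First I would handle the drift. Writing $\phi(x) = 1/x$, if $X_t$ solves $\dot x_t = -\alpha(x_t)$ between jumps, then $\widetilde X_t = \phi(X_t)$ satisfies $\dot{\widetilde X}_t = -\phi'(X_t)\alpha(X_t) = \frac{1}{X_t^2}\alpha(X_t) = \widetilde X_t^2 \alpha(1/\widetilde X_t) = \widetilde\alpha(\widetilde X_t)$, which is exactly the growth drift $+\widetilde\alpha$ appearing in \eqref{eq:Gsep}. Equivalently, at the level of generators, for a test function $v$ in the $\widetilde X$-coordinate, the drift contribution is $\widetilde\alpha(x) v'(x)$, and pulling back via $v = u\circ\phi$ one recovers $-\alpha(y) u'(y)$ at $y = 1/x$ after using $v'(x) = -u'(1/x)/x^2$; this is a one-line computation. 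Next, the jump part: when $X_{t-} = y$, the process jumps to $Y(y)$ with $\mathbf P(Y(y) > z) = k(z)/k(y)$ for $z \ge y$; correspondingly $\widetilde X$ jumps from $1/y$ to $1/Y(y)$, and since $Y(y) \ge y$ we have $1/Y(y) \le 1/y$, so $\widetilde X$ indeed collapses downward. The jump rate is unchanged at $\beta(y) = \beta(1/\widetilde X_{t-}) = \widetilde\beta(\widetilde X_{t-})$. For the collapse distribution, setting $x = 1/y$ and $w = 1/z \le x$, we get $\mathbf P(1/Y(y) < w \mid X_{t-} = y) = \mathbf P(Y(y) > 1/w) = k(1/w)/k(1/x) = \widetilde h(w)/\widetilde h(x)$, which is precisely the separable collapse kernel built from $\widetilde h$, matching the integral term $-\widetilde\beta(x)/\widetilde h(x)\int_0^x u'(w)\widetilde h(w)\,dw$ in \eqref{eq:Gsep} after the same substitution in the integral $\int_x^\infty k(z) u'(z)\,dz$ from \eqref{eq:generator}.

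Finally I would check the regularity hypotheses required of a growth-collapse triple: $\widetilde\alpha(x) = x^2\alpha(1/x)$ and $\widetilde\beta(x) = \beta(1/x)$ are continuous and positive on $(0,\infty)$ because $\alpha,\beta$ are, and $\widetilde h(x) = k(1/x)$ is positive and non-decreasing on $(0,\infty)$ precisely because $k$ is positive and non-increasing; continuity of $\widetilde h$ on $(0,\infty)$ follows from that of $k$. I do not anticipate a serious obstacle here — the whole statement is essentially a bookkeeping identity — but the one point requiring care is the direction of the jump in the transformed coordinate and the correct bookkeeping of the tail/CDF convention (the paper writes $K(x,y)$ for a tail $\int_{(y,\infty)}K(x,dz)$ while the growth-collapse kernel is naturally described through a lower tail), so I would make the substitution $z \mapsto 1/z$ in the generator integral explicitly and confirm that orientation and limits of integration transform correctly, in particular that $\int_x^\infty$ becomes $\int_0^{1/x}$ with the right sign once $dz$ is rewritten.
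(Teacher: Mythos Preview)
Your proposal is correct and takes essentially the same approach as the paper: both verify that the generator of $\widetilde X_t$ is $\widetilde{\mathcal G}$ via the change of variable $x\mapsto 1/x$, with the paper computing $\mathcal G(u\circ g)$ directly through Ito's formula and then substituting $y=1/x$ in the integral, while you organize the same computation by treating the drift and jump parts separately. Your additional check of the regularity hypotheses on $(\widetilde\alpha,\widetilde\beta,\widetilde h)$ is not in the paper's proof but is a welcome completeness point.
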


\begin{proof}
Let $u$ be any smooth test function and study $u ( \tilde X_t) = u \circ g ( X_t) $ with $g ( x) = 1/x . $ By Ito's formula for processes with jumps,  
$$ u ( \tilde X_t ) = u \circ g (X_t ) = u ( \tilde X_0 ) + \int_0^t {\mathcal G} (u \circ g ) (X_{t'}) dt' + M_t, $$ where $M_t$ is a local martingale. We obtain 
\begin{eqnarray*}
{\mathcal G} (u \circ g ) (x)  &=& -\alpha(x)(u \circ g)'(x)+ \frac{\beta(x)}{k(x)}\int_x^{\infty} (u \circ g)'(y) k(y) dy, \\
												&=& \frac{1}{x^2}\alpha(x)u'(\frac{1}{x}) + \frac{\beta(x)}{k(x)}\int_x^{\infty} u'(\frac{1}{y}) k(y) \frac{-dy}{y^2} 
\end{eqnarray*}
Using the change of variable $y=1/x ,$ this last expression can be rewritten as  
$$
\frac{1}{x^2}\alpha(x)u'(\frac{1}{x}) -\frac{\beta(x)}{k(x)}\int_0^{1/x} u'(z) k(\frac{1}{z}) dz
= \widetilde{\alpha }(y)u^{\prime }(y)-\frac{\widetilde{\beta }(y)}{\widetilde{h}(y)}\int_{0}^{y}u^{\prime }(t)\widetilde{h}(t)dt 
$$
which is the generator of the process $\widetilde{X}_{t}.$ 
\end{proof}
In what follows we speak about the above relation between the decay-surge (DS) process $ X$ and the growth-collapse (GC) process $ \tilde X$ as {\it DS-GC-duality}. Some simple properties of the process $X$ follow directly from the above duality as we show next. Of course, the above duality does only hold up to the first time one of the two processes leaves the interior $ (0, \infty ) $ of its state space. Therefore a particular attention has to be paid to state $0$ for $X_t$ or equivalently to state $ + \infty $ for $\tilde X_t.$ Most of our results will only hold true under conditions ensuring that, starting from $ x > 0, $ the process $X_t$ will not hit $0$ in finite time.
 
Another important difference between the two processes is that the simple transformation $ x \mapsto 1/x$ maps a priori unbounded sample paths $ X_t $ into bounded onces $ \tilde X_t $ (starting from $ \tilde X_0 = 1/x, $ almost surely, $ \tilde X_t \le \tilde x_t ( 1/x) -$ a relation which does not hold for $X$).

\subsection{First consequences of the DS-GC-duality}
Given $X_{0}=x > 0$, the first jump times both of the DS-process $X_t$ starting from $x,$ and of the GC-process $ \tilde X_t, $ starting from $ 1/x,$ coincide and are given by 
\begin{equation*}
T_{x}=\inf \{ t >  0 : X_t \neq X_{t-}  | X_0 = x  \}= \tilde T_{\frac1x}= \inf \{ t>0:\tilde X_{t}\neq \tilde X_{t-}|  \tilde X_{0}=\frac1x\}.
\end{equation*}
Introducing 
\begin{equation*}
\Gamma \left( x\right) :=\int_1^{x}\gamma \left( y\right) dy, \mbox{ where } \gamma \left( x\right) :=\beta \left( x\right) /\alpha \left( x\right) , x > 0 , 
\end{equation*}
and the corresponding quantity associated to the process $ \tilde X_t,$  
$$ \widetilde{\Gamma } \left( x\right) = \int_1^x \tilde \gamma \left( y\right)dy , \; \tilde \gamma \left( x\right)= \tilde \beta \left( x\right)/ \tilde \alpha \left( x\right) , x > 0 ,$$
clearly, $\widetilde{\Gamma }\left( x\right) =-\Gamma \left( 1/x\right) $ for all $ x > 0.$ 

Arguing as in Sections 2.4 and 2.5 of  \cite{GHL}, a direct consequence of the above duality is the fact that for all $t < t_0 ( x), $  
\begin{equation}  \label{eq:tx}
\mathbf{P}\left( T_{x} >t\right) =e^{-\int_{0}^{t}\beta
\left( x_{s}\left( x\right) \right) ds}=e^{-\left[ \Gamma \left( x\right)
-\Gamma \left( x_{t}\left( x\right) \right) \right] }.
\end{equation}

To ensure that $\mathbf{P}\left( T_{x} <\infty \right) =1,$ in accordance with Assumption 1 of \cite{GHL} we will impose the condition 
\begin{ass}\label{As1}
$\Gamma \left(0
\right) =- \infty .$
\end{ass}

\begin{proposition}\label{prop:extinction}
Under Assumption \ref{As1},  the stochastic
process $X_{t}(x) , x > 0 , $ necessarily jumps before reaching $0.$ In particular, for any $ x > 0, $ $ X_t(x)$ almost surely never reaches $0$ in finite time. 
\end{proposition}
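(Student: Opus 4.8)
Since every jump of $X_t(x)$ is strictly upward, i.e.\ $Y(v) > v$ for all $v > 0$, the trajectory can reach $0$ only by running down along the deterministic flow; starting from a strictly positive value this takes the (possibly infinite) time $t_0(\cdot)$ of \eqref{eq:timetoa}. The plan is therefore to show that, almost surely, the first jump occurs strictly before the flow would bring the process to $0$, i.e.\ $T_x < t_0(x)$, and then to iterate this fact over the successive jump times.

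For the first step I would use \eqref{eq:tx}. Write $\{T_x \ge t_0(x)\} = \bigcap_{0 < t < t_0(x)} \{T_x > t\}$; this is a decreasing family of events as $t \uparrow t_0(x)$, so by continuity from above of $\mathbf{P}$ and \eqref{eq:tx},
\[
\mathbf{P}\big( T_x \ge t_0(x) \big) = \lim_{t \uparrow t_0(x)} \mathbf{P}(T_x > t) = \lim_{t \uparrow t_0(x)} e^{-\left[ \Gamma(x) - \Gamma(x_t(x)) \right]}.
\]
The map $t \mapsto x_t(x)$ is non-increasing with $x_t(x) \downarrow 0$ as $t \uparrow t_0(x)$ — obvious when $t_0(x) < \infty$, and when $t_0(x) = \infty$ it holds because a limit $\ell > 0$ would force $\alpha(x_s(x))$ to stay bounded below by $\min_{[\ell, x]} \alpha > 0$, making $\int_0^t \alpha(x_s(x))\,ds$ diverge and $x_t(x)$ eventually negative. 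Since $\gamma = \beta/\alpha$ is continuous and positive on $(0, \infty)$ and $\Gamma(0^+) = -\infty$ by Assumption \ref{As1}, we get $\Gamma(x_t(x)) \to -\infty$, so the exponent tends to $+\infty$ and $\mathbf{P}(T_x \ge t_0(x)) = 0$. Hence $T_x < t_0(x)$ almost surely: the process jumps before reaching $0$, and at the first jump it sits at $X_{T_x}(x) = Y(x_{T_x}(x))$ with $x_{T_x}(x) > 0$, hence strictly positive.

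The second step is an induction via the strong Markov property. Applying it at $T_x$, the process restarts from the strictly positive value $X_{T_x}(x)$ and, by the first step, jumps again before reaching $0$. Writing $0 = T_0 < T_1 < T_2 < \cdots$ for the successive jump times, the trajectory stays strictly positive on each $[T_n, T_{n+1})$, so $X_t(x) > 0$ for all $t < T_\infty := \sup_n T_n$. If $T_\infty = \infty$ this proves that $0$ is never reached in finite time; if instead the process explodes ($T_\infty < \infty$) it is still strictly positive on $[0, T_\infty)$ and is sent to infinity / the cemetery at $T_\infty$, so $0$ is again not attained at any finite time — non-explosion itself being addressed separately later in the paper.

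The argument is short once \eqref{eq:tx} is in hand; the only points needing care are the verification that $x_t(x) \to 0$ as $t \uparrow t_0(x)$ in the divergent case $t_0(x) = \infty$, and the passage from ``jumps before reaching $0$'' to ``never reaches $0$ in finite time'' when an explosion cannot a priori be excluded. The one genuinely conceptual point is the initial reduction: because jumps are upward, hitting $0$ is entirely a flow phenomenon, so the whole statement is governed by the law of $T_x$ through \eqref{eq:tx} together with Assumption \ref{As1}.
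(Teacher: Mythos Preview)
Your proof is correct and self-contained, but it follows a different route from the paper. The paper proves this proposition in one line by invoking the DS--GC duality of Section~2.3: under the transformation $\widetilde{X}_t = 1/X_t$, the event ``$X$ reaches $0$ before jumping'' becomes ``$\widetilde{X}$ reaches $+\infty$ before jumping'', and the latter has probability zero by a result already established for growth--collapse processes in \cite{GHL}. Your argument instead works directly with the survival function \eqref{eq:tx} of the first jump time, letting $t\uparrow t_0(x)$ and using Assumption~\ref{As1} to force the exponent to diverge; you then iterate via the strong Markov property. In effect you are reproving, in the decay--surge setting, precisely the computation that \cite{GHL} carried out for the dual process and that the paper merely cites. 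The paper's approach is shorter but relies on external machinery; yours is more transparent and makes explicit where Assumption~\ref{As1} enters, and it also handles the explosion contingency carefully, which the paper's one-line proof glosses over.
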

\begin{proof}
By duality, we have 
$$ {\mathbf{P}} (X \mbox{ jumps before reaching $0$} | X_0 = x ) = {\mathbf{P}} (  \tilde X \mbox{ jumps before reaching $ + \infty $}| \tilde X_0 = \frac1x) = 1, $$
as has been shown in Section 2.5 of \cite{GHL}, and this implies the assertion. 
\end{proof}

In particular, the only situation where the question of the extinction of the process $X $ makes sense (either local or total) is when $t_0(x)<\infty $ and $\Gamma(0)>-\infty. $ 

\begin{example}
We give an example where finite time extinction of the process is possible. 
Suppose $\alpha \left( x\right) =\alpha_1 x^{a}$ with $\alpha_1>0$ and $a<1$. Then $x_{t}\left(
x\right) ,$ started at $x$, hits $0$ in finite time $t_{0}\left( x\right)
=x^{1-a}/\left[ \alpha_1 \left( 1-a\right) \right] $, with 
\[
x_{t}\left( x\right) =\left( x^{1-a}+\alpha_1 \left( a-1\right) t\right)
^{1/\left( 1-a\right) },
\]
see \cite{DSGHL}. Suppose $\beta \left( x\right) =\beta_1 >0$, constant. Then, with $\gamma_1=\beta_1/\alpha_1>0$  
\[
\Gamma \left( x\right) =\int_{1}^{x}\gamma \left( y\right) dy=\frac{\gamma_1 }{%
1-a}\left( x^{1-a}-1\right) 
\]
with $\Gamma \left( 0\right) =-\frac{\gamma_1 }{1-a}>-\infty .$ 
Assumption \ref{As1} is not fulfilled, so $X$ can hit $0$ in finite time and there
is a positive probability that $T_x  =\infty $. On this last
event, the flow $x_{t}\left( x\right) $ has all the time necessary to first
hit $0$ and, if in addition the kernel $k$ is chosen so as $k(0)=\infty ,$
to go extinct definitively. The time of extinction $\tau \left( x\right) $
of $X$ itself can be deduced from the renewal equation in distribution 
\[
\tau \left( x\right) \stackrel{d}{=}t_{0}\left( x\right) {\bf 1}_{\{T_x =\infty \}}+\tau ^{\prime }\left( Y\left( x_{T_x   }\left(
x\right) \right) \right) {\bf 1}_{\{T_x <\infty \}}
\]
where $\tau ^{\prime }$ is a copy of $\tau $.

We conclude that for this family of models, $X$ itself goes extinct in
finite time. This is an interesting regime that we shall not investigate any
further.
\end{example}

Let us come back to the discussion of Assumption \ref{As1}. 
It follows immediately from Eq. (13) in \cite{GHL} that for $x>0,$ under Assumption \ref{As1} and supposing that $ t_0 ( x) = \infty $ for all $ x > 0 ,$ 
\begin{equation*}
\mathbf{E}\left( T_{x} \right) =e^{-\Gamma \left( x\right) }\int_{0}^{x}\frac{dz}{\alpha \left( z\right) 
}e^{\Gamma \left( z\right) }.
\end{equation*}
Clearly, when $x\rightarrow 0$, $\mathbf{E}\left( T_{x} 
\right) \sim 1/\alpha \left( x\right) $.

\begin{remark}
$\left( i\right) $ If $\beta \left( 0\right) >0$ then Assumption \ref{As1} implies 
$t_{0}(x)=\infty ,$ such that $0$ is not accessible.

$\left( ii\right) $ Notice also that $t_0 (x) < \infty $ together with  Assumption \ref{As1} implies that $ \beta ( 0 ) = \infty $ such that 
the process $ X_t( x) $ is prevented from hitting $0$ even though $x_t(x) $ reaches it in finite time due to the fact that the jump rate $\beta (x) $ blows up as $x \to 0.$
\end{remark}

\subsection{Classification of state $0$}

Recall that for all $ x > 0, $ 
\begin{equation*}
t_{0}(x{)}=\int_{0}^{x}\frac{dy}{\alpha \left( y\right) }
\end{equation*}
represents the time required for $x_{t}$ to move from $x>0$ to $0$. So: 
\begin{eqnarray*}
&\text{ If } t_{0}(x{)}<\infty \mbox{ and } \Gamma ( 0 ) > - \infty , &\text{ state }0\text{ is accessible.} \\
&\text{ If } t_{0}(x{)}=\infty  \mbox{ or } \Gamma ( 0 ) = - \infty , &\text{ state }0\text{ is inaccessible.}
\end{eqnarray*}
We therefore introduce the following conditions which apply in the separable
case $K(x,y)=k(y)/k(x).$

\textbf{Condition (R):} $(i)$ $\beta \left( 0\right) >0$ and \newline
$\left( ii\right) $ $K\left( 0,y\right) =k\left( y\right) /k\left( 0\right)
>0 $ for some $y>0$ (and in particular $k\left( 0\right) <\infty) .$

\textbf{Condition (A):} $\frac{\beta(0)}{k(0)}k(y)=0 $ for all 
$y > 0 $. 

State $0$ is reflecting if condition $\left( R\right) $ is satisfied and it
is absorbing if condition (A) is satisfied.

This leads to four possible combinations for the boundary state $0$:

\textbf{Condition} $\left( R\right) $ and $t_{0}(x)<$\textbf{\ }$\infty $ and $ \Gamma (0 ) > - \infty :$ 
regular (reflecting and accessible).

\textbf{Condition }$\left( R\right) $\ and $t_{0}(x)=$\textbf{\ }$\infty $ or $ \Gamma ( 0 ) = - \infty :$
entrance (reflecting and inaccessible).

\textbf{Condition }$\left( A\right) $ and $t_{0}(x)<$\textbf{\ }$\infty $ and $ \Gamma (0 ) > - \infty :$ 
exit (absorbing and accessible).

\textbf{Condition }$\left( A\right) $ and $t_{0}(x)=$\textbf{\ }$\infty $ or $ \Gamma ( 0 ) = - \infty :$ natural
(absorbing and inaccessible).

\subsection{Speed measure.}
Suppose now an invariant measure (or speed measure) $\pi \left( dy\right) $
exists. Since we supposed $\alpha (x)>0$ for all $x>0,$ we necessarily have $%
x_{\infty }(x)=0$ for all $x>0$ and so the support of $\pi $ is $[
x_{\infty }(x)=0,\infty) .$ Thanks to our duality relation, by Eq. (19) of \cite{GHL} 
the explicit expression of the speed measure is given by $\pi ( dy) = \pi ( y) dy $ with 
\begin{equation}
\pi \left( y\right) =C\frac{k\left( y\right) e^{\Gamma \left( y\right) }}{%
\alpha \left( y\right) },  \label{C5}
\end{equation}
up to a multiplicative constant $C>0$. If and only if this function is
integrable at 0 and $\infty ,$ $\pi \left( y\right) $ can be tuned to a probability
density function. 

\begin{remark}
$\left( i\right) $When $k\left( x\right) =e^{-\kappa _{1}x}$, $\kappa _{1}>0$%
, $\alpha \left( x\right) =\alpha _{1}x$ and $\beta \left( x\right) =\beta
_{1}>0$ constant, $\Gamma \left( y\right) =\gamma _{1}\log y$, $\gamma
_{1}=\beta _{1}/\alpha _{1}$ and 
\begin{equation*}
\pi \left( y\right) =Cy^{\gamma _{1}-1}e^{-\kappa _{1}y}
\end{equation*}
a Gamma$\left( \gamma _{1},\kappa _{1}\right) $ distribution. This result is
well-known, corresponding to the linear decay-surge model (a jump version of
the damped Langevin equation) having an invariant (integrable) probability
density, see Malrieu (2015). We shall show later that the corresponding process $X $ is
positive recurrent.

$\left( ii\right) $A less obvious power-law example is as follows: Assume $%
\alpha \left( x\right) =\alpha _{1}x^{a}$ ($a>1$) and $\beta \left( x\right)
=\beta _{1}x^{b}$, $\alpha _{1},\beta _{1}>0$ so that $\Gamma \left(
y\right) =\frac{\gamma _{1}}{b-a+1}y^{b-a+1}.$ We have $\Gamma \left(
0\right) =-\infty $ if we assume $b-a+1=-\theta $ with $\theta >0$, hence $\Gamma
\left( y\right) =-\frac{\gamma _{1}}{\theta }y^{-\theta }.$ Taking $k\left(
y\right) =e^{-\kappa _{1}y^{\eta }}$, $\kappa _{1},\eta >0$%
\begin{equation*}
\pi \left( y\right) =Cy^{-a}e^{-\left( \kappa _{1}y^{\eta }+\frac{\gamma _{1}%
}{\theta }y^{-\theta }\right) }
\end{equation*}
which is integrable both at $y=0$ and $y=\infty $. 
As a special case, if $a=2$
and $b=0$ (constant jump rate $\beta \left( x\right) $), $\eta =1$ 
\begin{equation*}
\pi \left( y\right) =Cy^{-2}e^{-\left( \kappa _{1}y+\gamma _{1}y^{-1}\right)
},
\end{equation*}
an inverse Gaussian density.

$\left( iii\right) $ In Eliazar and Klafter (2007) and (2009), a special case of our
model was introduced for which $k\left( y\right) =\beta \left( y\right) $.
In such cases, 
\begin{equation*}
\pi \left( y\right) =C\gamma \left( y\right) e^{\Gamma \left( y\right) }
\end{equation*}
so that 
\begin{equation*}
\int_{0}^{x}\pi \left( y\right) dy=C\left( e^{\Gamma \left( x\right)
}-e^{\Gamma \left( 0\right) }\right) =Ce^{\Gamma \left( x\right) },
\end{equation*}
under the Assumption $\Gamma \left( 0\right) =-\infty $. If in addition $%
\Gamma \left( \infty \right) <\infty $, $\pi \left( y\right) $ can be tuned
to a probability density.\newline
\end{remark}

\section{Scale function and hitting times}

In this section we start by studying the scale function of $X_t,$ before switching to hitting times features that make use of it.
A scale function $s\left( x\right) $ of the process is any function solving $%
\left( \mathcal{G}s\right) \left( x\right) =0 .$  In other words, a scale function is a function that transforms the process into a local martingale. Of course, any constant function is solution. Notice that for the growth-collapse model considered in \cite{GHL}, other scale functions than the constant ones do not exist.

In what follows we are interested in non-constant solutions and conditions ensuring the existence of those. 
To clarify ideas,  we introduce the following condition
\begin{ass}\label{ass:C}
Let 
\begin{equation}\label{eq:s2}
s (x)=\int_{1}^{x}\gamma (y)e^{-\Gamma (y)}/k(y)dy , \; x \geq 0, 
\end{equation}
and suppose that $ s ( \infty ) = \infty .$ 
\end{ass}
Notice that Assumption \ref{ass:C} implies that $ k( \infty ) = 0 $ which is reasonable since it prevents jumps of the process $X_t$ jumping from some 
finite position $X_{t-} $  to an after jump position $X_t = X_{t- }  + \Delta ( X_{t-}) = + \infty .$

\begin{proposition}\label{prop:scale}
(1) Suppose $\Gamma (\infty )=\infty .$ Then the function $s$ introduced in \eqref{eq:s2} above is a strictly increasing version of the scale function of the process obeying $s (1)=0.$

(1.1) If additionally Assumptions \ref{As1} and \ref{ass:C} hold and if $ k(0) < \infty, $ then $ s(0 ) = - \infty $ and $ s( \infty ) = \infty , $ such that $s  $ is a space transform $ [0, \infty ) \to [ - \infty , \infty ) .$

(1.2) If Assumption \ref{ass:C} does not hold,  then 
\begin{equation*}
s_{1}(x)=\int_{x}^{\infty }\gamma (y)e^{-\Gamma (y)}/k(y)dy = s ( \infty ) - s(x) 
\end{equation*}
is a version of the scale function which is strictly decreasing, positive,
such that $s_{1}(\infty )=0.$

(2) Finally, suppose that $\Gamma (\infty )<\infty .$ Then the only scale
functions belonging to $C^1 $  are the constant ones.
\end{proposition}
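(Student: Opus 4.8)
The plan is to reduce $\mathcal{G}s=0$ to a first-order linear ODE and read off all four assertions from its (essentially unique) solution. Let $s$ be a $C^{1}$ scale function and set $g(x):=\int_{x}^{\infty}k(y)s'(y)\,dy$; for $\mathcal{G}s(x)$ to be defined this integral must converge, so $g$ is a well-defined, finite, $C^{1}$ function with $g'(x)=-k(x)s'(x)$. The equation $\mathcal{G}s(x)=0$ reads $\alpha(x)s'(x)=\frac{\beta(x)}{k(x)}g(x)$, i.e. $s'(x)=\frac{\gamma(x)}{k(x)}g(x)$. Substituting into $g'=-ks'$ gives $g'(x)=-\gamma(x)g(x)$, hence $g(x)=g(1)e^{-\Gamma(x)}$. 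Finally, $g(x)\to 0$ as $x\to\infty$, being the tail of a convergent integral, so comparison with $g(1)e^{-\Gamma(x)}$ forces $g(1)e^{-\Gamma(\infty)}=0$. This single relation is what produces the dichotomy in $\Gamma(\infty)$.

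For part (1), $\Gamma(\infty)=\infty$ makes $e^{-\Gamma(\infty)}=0$, so the constraint is vacuous and $g(1)$ is free; normalising $g(1)=1$ and $s(1)=0$ gives $s'(x)=\gamma(x)e^{-\Gamma(x)}/k(x)$, which is exactly \eqref{eq:s2}. One then checks this $s$ is admissible and solves the equation: $k(y)s'(y)=\gamma(y)e^{-\Gamma(y)}=-\tfrac{d}{dy}e^{-\Gamma(y)}$ is integrable on $[x,\infty)$ with $\int_{x}^{\infty}k(y)s'(y)\,dy=e^{-\Gamma(x)}$ (here $\Gamma(\infty)=\infty$ is used), so $\mathcal{G}s(x)=-\alpha(x)\gamma(x)e^{-\Gamma(x)}/k(x)+\beta(x)e^{-\Gamma(x)}/k(x)=0$ since $\alpha\gamma=\beta$. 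Strict monotonicity and $s(1)=0$ are immediate from the explicit $s'$.

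For (1.1) I would estimate $s(0)=-\int_{0}^{1}\gamma(y)e^{-\Gamma(y)}/k(y)\,dy$. Since $k$ is non-increasing, $k(y)\le k(0)<\infty$ on $(0,1]$, so the integrand is at least $k(0)^{-1}\gamma(y)e^{-\Gamma(y)}$, and $\int_{y}^{1}\gamma(u)e^{-\Gamma(u)}\,du=e^{-\Gamma(y)}-1\to\infty$ as $y\to 0$ by Assumption \ref{As1}; hence $s(0)=-\infty$, and with $s(\infty)=\infty$ from Assumption \ref{ass:C} this gives the claimed space transform. For (1.2), failure of Assumption \ref{ass:C} means $s(\infty)<\infty$, so $s_{1}:=s(\infty)-s$ is well-defined; as a constant minus a scale function it is again a scale function, and from $s_{1}'=-s'<0$, $s_{1}(x)=\int_{x}^{\infty}\gamma(y)e^{-\Gamma(y)}/k(y)\,dy>0$ and $s_{1}(\infty)=0$ one reads off strict decrease, positivity and the limit. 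For part (2), $\Gamma(\infty)<\infty$ gives $e^{-\Gamma(\infty)}>0$, so $g(1)e^{-\Gamma(\infty)}=0$ forces $g(1)=0$, whence $g\equiv 0$, $s'\equiv 0$, and $s$ is constant.

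The only delicate point is bookkeeping around the domain of $\mathcal{G}$: asking for $\mathcal{G}s(x)=0$ already presupposes $\int_{x}^{\infty}k(y)s'(y)\,dy$ converges, and it is precisely this that pins down $g(\infty)=0$ and hence the $\Gamma(\infty)$ dichotomy; everything else is elementary, the one genuinely quantitative step being the lower bound on $\int_{0}^{1}\gamma e^{-\Gamma}/k$ in (1.1), where the hypothesis $k(0)<\infty$ is indispensable.
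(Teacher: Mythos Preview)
Your proof is correct and follows essentially the same route as the paper: both reduce $\mathcal{G}s=0$ to a first-order linear ODE for the tail integral $\int_{x}^{\infty}k(y)s'(y)\,dy$ (the paper writes this as $u(\infty)-u(x)$ with $u'=ks'$), solve it as a multiple of $e^{-\Gamma(x)}$, and read off the $\Gamma(\infty)$ dichotomy from the requirement that the tail vanish at infinity. Your presentation is in fact a bit tighter---defining $g$ directly as the tail avoids the paper's somewhat artificial split into $u(\infty)=0$ versus $u(\infty)\neq 0$---and you supply the explicit lower bound showing $s(0)=-\infty$ in (1.1), a detail the paper asserts without justification.
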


\begin{remark}
\begin{enumerate}
\item[1.]
We shall see later that -- as in the case of one-dimensional diffusions, see e.g. Example 2 in Section 3.8 of Has'minskii (1980) -- the fact that  $s$ is a space transform as in item (1.1) above is related to the Harris recurrence of the process. 
\item[2.]
The assumption $\Gamma ( \infty ) < \infty $ of item (3) above corresponds to Assumption 2 of \cite{GHL} where this was the only case that we considered. As a consequence, for the GC-model considered there we did not dispose of non-constant scale functions.
\end{enumerate}
\end{remark}

\begin{proof}
To find a $C^1-$scale function $s,$ it necessarily solves 
$$
\left( \mathcal{G}s\right) \left( x\right) = -\alpha \left( x\right) s^{\prime }\left( x\right) +\beta \left( x\right)/ k\left( x\right)
\int_{x}^{\infty }k\left( y\right) s^{\prime }\left( y\right) dy=0
$$
such that for all $ x > 0, $ 
\begin{equation}
k\left( x\right) s^{\prime }\left( x\right) -\gamma \left( x\right)
\int_{x}^{\infty }k\left( y\right) s^{\prime }\left( y\right) dy=0.
\end{equation}

Putting $u^{\prime }\left( x\right) =k\left( x\right) s^{\prime }\left(
x\right) $, the above implies in particular that $u^{\prime}$ is
integrable in a neighborhood of $+\infty $ such that $u ( \infty ) $ must be
a finite number. We get $u^{\prime }\left( x\right) =\gamma \left( x\right)
\left( u\left( \infty \right) -u\left( x\right) \right). $

\textbf{Case 1 :} $u(\infty )=0$ so that $u(x) = - c_1 e^{ - \Gamma ( x) } $
for some constant $c_1, $ whence $\Gamma \left( \infty \right) =\infty .$
We obtain 
\begin{equation}
s^{\prime }\left( x\right) = c_{1}\frac{\gamma \left( x\right) }{k\left(
x\right) }e^{-\Gamma \left( x\right) }
\end{equation}
and thus

\begin{equation}  \label{eq:scale}
s\left( x\right) =c_2 + c_{1}\int_{1}^{x }\frac{\gamma \left( y\right) }{%
k\left( y\right) }e^{-\Gamma \left( y\right) }dy
\end{equation}
for some constants $c_{1}, c_2.$ Taking $ c_2 = 0 $ and $ c_1 = 1 $ gives the formula \eqref{eq:s2}, and both items (1.1) and (1.2) follow from this. 

\textbf{Case 2 :} $u(\infty )\neq 0$ is a finite number. Putting $%
v(x)=e^{\Gamma (x)}u(x),$ $v$ then solves 
\begin{equation*}
v^{\prime }(x)=u(\infty )\gamma (x)e^{\Gamma (x)}
\end{equation*}
such that 
\begin{equation*}
v(x)=d_{1}+u(\infty )e^{\Gamma (x)}
\end{equation*}
and thus 
\begin{equation*}
u(x)=e^{-\Gamma (x)}d_{1}+u(\infty ).
\end{equation*}
Letting $x\to \infty ,$ we see that the above is perfectly well-defined for
any value of the constant $d_{1},$ if we suppose $\Gamma (\infty )=\infty .$

As a consequence, $u^{\prime }(x)=-d_{1}\gamma (x)e^{- \Gamma (x)},$ leading
us again to the explicit formula 
\begin{equation}\label{eq:scalebis}
s(x)=c_{2} + c_{1}\int_{1}^x\frac{\gamma \left( y\right) }{k\left( y\right) }%
e^{-\Gamma \left( y\right) }dy,
\end{equation}
with $c_1 = -d_1,$ implying items (1.1) and (1.2). 

Finally, if $\Gamma ( \infty ) < \infty, $ we see that we have to take $d_1
= 0 $ implying that the only scale functions in this case are the constant
ones.
\end{proof}

\begin{example}
In the linear case example with $\beta \left( x\right) =\beta _{1}>0$, $%
\alpha \left( x\right) =\alpha _{1}x$, $\alpha _{1}>0$ and $k\left( y\right)
=e^{-y},$ with $\gamma _{1}=\beta _{1}/\alpha _{1},$  Assumption \ref{ass:C} is satisfied such that 
\begin{equation*}
s \left( x\right) =\gamma _{1}\int_{1}^{x}y^{-\left( \gamma _{1}+1\right)
}e^{y}dy
\end{equation*}
which is diverging both as $x\rightarrow 0$ and as $x\to +\infty .$ Notice
that $0$ is inaccessible for this process, i.e., starting from a strictly
positive position $x>0,$ $X_{t}$ will never hit $0.$ Defining the process on
the state space $(0,\infty ),$ invariant under the dynamics, the process is
recurrent and even positive recurrent as we know from the Gamma shape of its
invariant speed density.
\end{example}

\subsection{Hitting times}

Fix $a < x < b.$ In what follows we shall be interested in hitting times
of the positions $a$ and $b,$ starting from $x,$ under the condition $\Gamma
( \infty ) = \infty .$ Due to the asymmetric structure of the process
(continuous motion downwards and up-moves by jumps only), these times are
given by 
\begin{equation*}
\tau_{x, b } = \inf \{ t > 0 : X_t = b \} = \inf \{ t > 0 : X_t \le b ,
X_{t- } > b \}
\end{equation*}
and 
\begin{equation*}
\tau_{x, a } = \inf \{ t > 0 : X_t = a \} = \inf \{ t > 0 \  :  \ X_t \le a \} .
\end{equation*}
Obviously, $\tau_{a,a} = \tau_{b, b } = 0 .$

Let $T=\tau _{x,a}\wedge \tau _{x,b}.$ Contrarily to the study of processes with continuous trajectories, it is not clear that $T
<\infty $ almost surely. Indeed, starting from $x,$ the process could jump
across the barrier of height $b$ before hitting $a$ and then never enter
the interval $[0,b]$ again. So we suppose in the sequel that $T<\infty $
almost surely. Then 
\begin{equation*}
\mathbf{P}\left( \tau _{x,a}<\tau _{x,b}\right) +\mathbf{P}\left( \tau
_{x,b}<\tau _{x,a}\right) =1.
\end{equation*}

\begin{proposition}\label{prop:exit}
Suppose $\Gamma (\infty )=\infty $ and that Assumption \ref{ass:C} does not hold. Let $0<a<x<b<\infty $ and
suppose that $T= \tau _{x,a}\wedge \tau _{x,b}<\infty $ almost surely. Then 
\begin{equation}
\mathbf{P}\left( \tau _{x,a}<\tau _{x,b}\right) = \frac{\int_{x}^{b}\frac{%
\gamma \left( y\right) }{k\left( y\right) }e^{{-}\Gamma \left( y\right) }dy}{%
\int_{a}^{b}\frac{\gamma \left( y\right) }{k\left( y\right) }e^{{-}\Gamma
\left( y\right) }dy}.  \label{scaleproba}
\end{equation}
\end{proposition}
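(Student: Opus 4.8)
My plan is to run the classical scale-function argument: build a uniformly integrable martingale out of a suitable scale function evaluated along the process and then apply optional stopping at $T$. The key point — and the reason the proposition is stated under the hypothesis that Assumption \ref{ass:C} fails — is that in this regime Proposition \ref{prop:scale}(1.2) supplies the scale function $s_1(x) = \int_x^\infty \gamma(y)e^{-\Gamma(y)}/k(y)\,dy = s(\infty) - s(x)$, which is $C^1$ on $(0,\infty)$, strictly decreasing, positive, with $\mathcal{G}s_1 \equiv 0$ and $s_1(\infty) = 0$. Since $a > 0$ and $s(\infty) < \infty$, for every $y \ge a$ one has $0 < s_1(y) \le s_1(a) < \infty$, so $s_1$ is \emph{bounded} on $[a,\infty)$; had $s(\infty) = \infty$, this boundedness — and the whole argument below — would collapse, consistent with the asymmetry remark following \eqref{eq:exit1}.

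Next I would build the martingale. Before time $T$ the process stays in $(a,\infty)$, and since $T<\infty$ almost surely by hypothesis, $X$ cannot explode before $T$ (an explosion towards $+\infty$ would prevent $X$ from ever reaching $a$ or $b$). Applying Itô's formula for processes with jumps to $s_1$, stopping at $T$, and using $\mathcal{G}s_1 \equiv 0$, the process $\big(s_1(X_{t\wedge T})\big)_{t\ge 0}$ is a local martingale; being bounded, with values in $[0,s_1(a)]$, it is in fact a uniformly integrable true martingale. Optional stopping then gives $\mathbf{E}\big[s_1(X_{t\wedge T})\big] = s_1(x)$ for every $t$, and letting $t\to\infty$, with $X_{t\wedge T}\to X_T$ almost surely and bounded convergence, yields $\mathbf{E}[s_1(X_T)] = s_1(x)$.

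It then remains to identify $X_T$. On $\{T<\infty\}$ one has $X_T\in\{a,b\}$: on $\{\tau_{x,a}<\tau_{x,b}\}$ clearly $X_T=a$, while on $\{\tau_{x,b}<\tau_{x,a}\}$ the process attains the level $b$, and because its upward moves occur only through jumps whereas the level $b>x$ can be reached only by the continuous downward flow, one has $X_T=b$ there. Writing $p=\mathbf{P}(\tau_{x,a}<\tau_{x,b})$ and $\mathbf{P}(\tau_{x,b}<\tau_{x,a})=1-p$, I obtain $s_1(a)p+s_1(b)(1-p)=s_1(x)$, hence $p=(s_1(x)-s_1(b))/(s_1(a)-s_1(b))$; since $s_1(x)-s_1(b)=\int_x^b\gamma(y)e^{-\Gamma(y)}/k(y)\,dy$ and $s_1(a)-s_1(b)=\int_a^b\gamma(y)e^{-\Gamma(y)}/k(y)\,dy$, this is exactly \eqref{scaleproba}.

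The main obstacle is the martingale step: one must justify cleanly that $\big(s_1(X_{t\wedge T})\big)$ is a genuine martingale, which rests on the non-explosion of $X$ before $T$ (forced by the hypothesis $T<\infty$ a.s.) together with the boundedness of $s_1$ on $[a,\infty)$ — the latter unavailable without the hypothesis that Assumption \ref{ass:C} fails. A secondary point still needing a word is the geometric fact used to obtain $X_T=b$ on $\{\tau_{x,b}<\tau_{x,a}\}$: the process can cross level $b$ downwards only continuously, so at the first visit to $b$ no jump occurs and $X$ equals $b$ there.
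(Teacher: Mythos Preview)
Your proposal is correct and follows essentially the same route as the paper: use the decreasing scale function $s_1$ from Proposition~\ref{prop:scale}(1.2), observe that $X_{t\wedge T}\ge a$ so that $s_1(X_{t\wedge T})\le s_1(a)$ is bounded, apply optional stopping, and read off the linear equation $s_1(x)=s_1(a)\,p+s_1(b)(1-p)$. Your write-up is in fact a bit more explicit than the paper's on the geometric identification of $X_T$ and on why the hypothesis ``Assumption~\ref{ass:C} fails'' is essential for boundedness, both of which are worthwhile additions.
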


\begin{proof}
Under the above assumptions,  $s_1 (X_{t})$ is a local martingale
and the stopped martingale $M_{t}=s_1(X_{T\wedge t})$ is bounded, which
follows from the fact that $X_{T\wedge t}\geq a$ together with the
observation that $s_1$ is decreasing implying that $M_{t}\leq s_1(a).$
Therefore from the stopping theorem we have 
\begin{equation*}
s_1 (x)=\mathbb{E}(M_0)=\mathbb{E}(s_1(X_0))=\mathbb{E}(M_T).
\end{equation*}
Moreover, 
\begin{equation*}
\mathbb{E}(M_T)=s_1(a)\mathbf{P}(T=\tau _{x,a})+s_1(b)\mathbf{P}(T=\tau
_{x,b}).
\end{equation*}
But $\{T=\tau _{x,a}\}=\{\tau _{x,a}<\tau _{x,b}\}$ and $\{T=\tau
_{x,b}\}=\{\tau _{x,b}<\tau _{x,a}\},$ such that 
\begin{equation*}
s_1(x)=s_1(a)\mathbf{P}(\tau _{x,a}<\tau _{x,b})+s_1 (b)\mathbf{P}(\tau
_{x,b}<\tau _{x,a}).
\end{equation*}
\end{proof}

\begin{remark}
- See \cite{KS} for similar arguments in a particular case of a constant flow and exponential jumps.

- We stress that it is not possible to deduce the above formula without imposing the 
existence of $ s_1$ (that is, if Assumption \ref{ass:C} holds such that $s_1$ is not well-defined). Indeed, if we would want to consider the local martingale $ s ( X_t) $ instead, the stopped martingale $ s ( X_{t \wedge T}) $ is not bounded since $ X_{t \wedge T} $
might take arbitrary values in $ ( a, \infty ) , $ 
such that it is not possible to apply the stopping rule. 
\end{remark}

\begin{remark}
Let $\tau _{x,[b,\infty )}=\inf \{t>0:X_{t}\geq b\}$ and $\tau
_{x,[0,a]}=\inf \{t>0:X_{t}\le a\}$ be the entrance times to the intervals $%
[b,\infty )$ and $[0,a].$ Observe that by the structure of the process,
namely the continuity of the downward motion, 
\begin{equation*}
\{\tau _{x,a}<\tau _{x,b}\}\subset \{\tau _{x,a}<\tau _{x,[b,\infty
)}\}\subset \{\tau _{x,a}<\tau _{x,b}\}.
\end{equation*}
Indeed, the second inclusion is trivial since $\tau _{x,[b,\infty )}\le \tau
_{x,b}.$ The first inclusion follows from the fact that it is not possible
to jump across $b$ and then hit $a$ without touching $b.$ Therefore, %
\eqref{scaleproba} can be rewritten as 
\begin{equation}
\mathbf{P}\left( \tau _{x,[0,a]}<\tau _{x,[b,\infty )}\right) =\frac{%
s_{1}\left( x\right) -s_{1}\left( b\right) }{s_{1}\left( a\right)
-s_{1}\left( b\right) } .  \label{scalebis}
\end{equation}

Now suppose that the process does not explode in finite time, that is, during each finite time interval, almost surely, only a finite number of jumps appear. In this case  $\tau
_{x,[b,\infty )}\overset{}{\to }+\infty $ as $b\to \infty .$ Then, letting $ b \to \infty $ in \eqref{scalebis}, 
we obtain for any $%
a>0,$ 
\begin{equation}
\mathbf{P}\left( \tau _{x,a}<\infty \right) ={\frac{s_{1}(x)}{s_{1}(a)}}=%
\frac{\int_{x}^{\infty }\frac{\gamma \left( y\right) }{k\left( y\right) }e^{{%
-}\Gamma \left( y\right) }dy}{\int_{a}^{\infty }\frac{\gamma \left( y\right) 
}{k\left( y\right) }e^{{-}\Gamma \left( y\right) }dy}<1,  \label{eq:scale2}
\end{equation}
since $ \int_a^x \frac{\gamma \left( y\right) 
}{k\left( y\right) }e^{{-}\Gamma \left( y\right) }dy > 0 $ by assumptions on $ k $ and $ \gamma .$ 
\end{remark}

As a consequence, we obtain the following

\begin{proposition}
\label{cor:transience} Suppose that $\Gamma ( \infty ) = \infty $ and that Assumption \ref{ass:C} does not hold. Then either the processes explodes 
in finite time with positive probability or
it  is transient at $+\infty , $ i.e. for
all $a<x,$ $\tau _{x,a}=\infty $ with positive probability.
\end{proposition}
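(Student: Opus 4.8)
The plan is to prove the dichotomy by contraposition: assuming that the process does \emph{not} explode in finite time — so that, almost surely, $X_t$ is finite and well defined for every $t\ge 0$ — I show it is transient at $+\infty$. Fix $a\in(0,x)$. Since $\Gamma(\infty)=\infty$ and Assumption \ref{ass:C} fails, Proposition \ref{prop:scale}(1.2) supplies the strictly decreasing, strictly positive $C^{1}$ scale function $s_{1}(x)=\int_{x}^{\infty}\gamma(y)e^{-\Gamma(y)}/k(y)\,dy$ with $s_{1}(\infty)=0$; in particular $0<s_{1}\le s_{1}(a)<\infty$ on $[a,\infty)$. Because $\mathcal{G}s_{1}\equiv 0$, Ito's formula for PDMP's shows that $s_{1}(X_{t})$ is a local martingale on $[0,\infty)$.

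The key step is a \emph{one-barrier} optional-stopping argument. First I would observe that, by the continuity of the downward motion and the fact that jumps go upwards, the process enters $\{X\le a\}$ only by a continuous descent through $a$; hence $X_{t\wedge\tau_{x,a}}\ge a$ for all $t$, and $X_{\tau_{x,a}}=a$ on $\{\tau_{x,a}<\infty\}$. Consequently $M_{t}:=s_{1}(X_{t\wedge\tau_{x,a}})$ is a local martingale with values in $[0,s_{1}(a)]$, hence a bounded, uniformly integrable martingale, and it converges a.s.\ and in $L^{1}$ to some $M_{\infty}$ with $\mathbb{E}[M_{\infty}]=\mathbb{E}[M_{0}]=s_{1}(x)$. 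On $\{\tau_{x,a}<\infty\}$ the stopped process is frozen at $a$ from time $\tau_{x,a}$ on, so $M_{\infty}=s_{1}(a)$ there, while $M_{\infty}\ge 0$ everywhere; therefore
\[
s_{1}(x)=\mathbb{E}[M_{\infty}]\ge s_{1}(a)\,\mathbf{P}(\tau_{x,a}<\infty),
\]
i.e.\ $\mathbf{P}(\tau_{x,a}<\infty)\le s_{1}(x)/s_{1}(a)<1$ since $s_{1}$ is strictly decreasing and $x>a$. As $a\in(0,x)$ was arbitrary, $\mathbf{P}(\tau_{x,a}=\infty)>0$ for every such $a$, which is transience at $+\infty$. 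This is the rigorous form of the heuristic limit $b\to\infty$ in the Remark preceding the statement, and is consistent with the identity \eqref{eq:scale2}.

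The point that needs care — and the reason I would stop $s_{1}(X_{t})$ at the single barrier $a$ rather than simply quoting \eqref{eq:scale2} — is that \eqref{eq:scale2} was derived through Proposition \ref{prop:exit}, which requires $\tau_{x,a}\wedge\tau_{x,b}<\infty$ almost surely; under mere non-explosion this is \emph{not} automatic, since the process may jump above $b$ and never come back (exactly the transient behaviour we are trying to detect), so optional stopping at the two-sided exit time is not available. Stopping at $\tau_{x,a}$ alone circumvents this entirely: boundedness and uniform integrability of $M_{t}$ come for free from $s_{1}\downarrow 0$ and $X_{t\wedge\tau_{x,a}}\ge a$, with no a priori finiteness of any stopping time required. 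The remaining verifications — that $X_{\tau_{x,a}}=a$ exactly, that $s_{1}$ is bounded on $[a,\infty)$, and that $s_{1}(X_{t})$ is a genuine local martingale — are routine given Proposition \ref{prop:scale} and the construction of the process.
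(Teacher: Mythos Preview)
Your argument is correct. The route, however, differs from the paper's. The paper argues by contradiction: assuming non-explosion and $\tau_{x,a}<\infty$ a.s., it observes that then $T=\tau_{x,a}\wedge\tau_{x,b}<\infty$ a.s., so the two-barrier exit formula of Proposition~\ref{prop:exit} applies; letting $b\to\infty$ (legitimate under non-explosion since $\tau_{x,[b,\infty)}\to\infty$) yields $\mathbf{P}(\tau_{x,a}<\infty)=s_1(x)/s_1(a)<1$, a contradiction. You instead run a direct one-barrier optional-stopping argument on $s_1(X_{t\wedge\tau_{x,a}})$, using only that $s_1$ is bounded on $[a,\infty)$, and obtain the inequality $\mathbf{P}(\tau_{x,a}<\infty)\le s_1(x)/s_1(a)$ without any appeal to Proposition~\ref{prop:exit} or any finiteness assumption on a stopping time. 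Your approach is more self-contained and sidesteps the hypothesis $T<\infty$ a.s.\ that Proposition~\ref{prop:exit} carries (and which you rightly flag as not automatic); the paper's approach is shorter on the page because it recycles the already-proved two-barrier formula, the contradiction hypothesis being precisely what supplies $T<\infty$ a.s.
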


\begin{proof}
Let $ a < x < b $ and $T = \tau_{x, a } \wedge \tau_{x, b } $ and suppose that almost surely, the process does not explode in finite time. We show  that in this case, $ \tau_{x, a } = \infty $ with 
positive probability. 

Indeed, suppose that $ \tau_{x, a } < \infty $ almost surely. Then \eqref{scaleproba} holds, and letting $b \to \infty, $ we obtain \eqref{eq:scale2} implying that $ \tau_{x, a } = \infty $ with 
positive probability which is a contradiction.
\end{proof}

\begin{example}Choose $\alpha \left( x\right) =x^{2}, $  $\beta \left(
x\right) =1+x^{2}$ so that $\gamma \left( x\right) =1+1/x^{2}$%
 and $\Gamma \left( x\right) =x-1/x $  with $\Gamma \left(
0\right) =-\infty $ and $\Gamma \left( \infty \right) =\infty .$

Choose also $k\left( x\right) =e^{-x/2}. $ Then Assumption \ref{ass:C} is violated: the survival function of the big upward jumps decays too 
slowly in comparison to the decay of $ e^{ - \Gamma}.$ The speed density
is  
\begin{equation*}
\pi \left( x\right) =C\frac{k\left( x\right) e^{\Gamma \left( x\right) }}{%
\alpha \left( x\right) }=Cx^{-2}e^{x/2 -1/x}
\end{equation*}
which is integrable at $ 0 $  but not at $ \infty .$ The process explodes (has an infinite number of upward jumps in finite time) with positive probability. 
\end{example}

\subsection{First moments of hitting times}

Let $a > 0 .$ We are seeking for positive solutions of 
\begin{equation*}
\left( \mathcal{G}\phi _{a}\right) \left( x\right) =-1, x \geq a,
\end{equation*}
with boundary condition $\phi _{a}\left( a\right) =0.$ The above is equivalent to 
\begin{equation*}
\left( \mathcal{G}\phi _{a}\right) \left( x\right) =-\alpha \left( x\right)
\phi _{a}^{\prime }\left( x\right) +\frac{\beta \left( x\right) }{k\left(
x\right) }\int_{x}^{\infty }k\left( y\right) \phi _{a}^{\prime }\left(
y\right) dy=-1.
\end{equation*}
This is also 
\begin{equation*}
-k\left( x\right) \phi _{a}^{\prime }\left( x\right) +\gamma \left( x\right)
\int_{x}^{\infty }k\left( y\right) \phi _{a}^{\prime }\left( y\right) dy=-%
\frac{k\left( x\right) }{\alpha \left( x\right) }.
\end{equation*}
Putting $U\left( x\right) :=\int_{x}^{\infty }k\left( y\right) \phi
_{a}^{\prime }\left( y\right) dy$, the latter integro-differential equation
reads $U^{\prime }\left( x\right) =-\gamma \left( x\right) U\left( x\right) -%
\frac{k\left( x\right) }{\alpha \left( x\right) }. $ Supposing
that $\int^{+\infty} \pi (y ) dy < \infty $ (recall \eqref{C5}), this leads
to 
\begin{eqnarray*}
U\left( x\right) &=&e^{-\Gamma \left( x\right) }\int_{x}^{\infty }e^{\Gamma
\left( y\right) }\frac{k\left( y\right) }{\alpha \left( y\right) }dy , \\
-U^{\prime }\left( x\right) &=&\gamma \left( x\right) e^{-\Gamma \left(
x\right) }\int_{x}^{\infty }e^{\Gamma \left( y\right) }\frac{k\left(
y\right) }{\alpha \left( y\right) }dy+\frac{k\left( x\right) }{\alpha \left(
x\right) }=k\left( x\right) \phi _{a}^{\prime }\left( x\right) ,
\end{eqnarray*}
such that 
\begin{eqnarray}  \label{eq:phia}
\phi _{a}\left( x\right) &=&\int_{a}^{x}dy\frac{\gamma \left( y\right) }{%
k\left( y\right) }e^{-\Gamma \left( y\right) }\int_{y}^{\infty }e^{\Gamma
\left( z\right) }\frac{k\left( z\right) }{\alpha \left( z\right) }%
dz+\int_{a}^{x}\frac{dy}{\alpha \left( y\right) } \nonumber \\
&=&\int_{a}^{\infty }dz\pi \left( z\right) \left[ s_1\left( a\right) -s_1
\left( x\wedge z\right) \right] +\int_{a}^{x}\frac{dy}{\alpha \left(
y\right) }.
\end{eqnarray}

Notice that $[ a, \infty ) \ni x \mapsto \phi_a ( x) $ is non-decreasing and
that $\phi_a ( x) < \infty $ for all $x > a > 0$ under our assumptions.
Dynkin's formula implies that for all $x > a $ and all $t \geq 0, $ 
\begin{equation}  \label{eq:later}
\mathbf{E}_x ( t \wedge \tau_{x, a } )= \phi_a ( x)- \mathbf{E}_x ( \phi_a (
X_{t \wedge \tau_{x, a } } ) ).
\end{equation}
In particular, since $\phi_a ( \cdot ) \geq 0, $ 
\begin{equation*}
\mathbf{E}_x ( t \wedge \tau_{x, a } ) \le \phi_a ( x) < \infty ,
\end{equation*}
such that we may let $t \to \infty $ in the above inequality to obtain by
monotone convergence that 
\begin{equation*}
\mathbf{E}_x ( \tau_{x, a } ) \le \phi_a ( x) < \infty .
\end{equation*}
In a second step, we obtain from \eqref{eq:later}, using Fatou's lemma, that 
\begin{multline*}
\mathbf{E}_x ( \tau_{x, a } ) = \lim_{ t \to \infty } \mathbf{E}_x ( t
\wedge \tau_{x, a } ) = \phi_a ( x) - \lim_{ t \to \infty } \mathbf{E}_x (
\phi_a ( X_{t \wedge \tau_{x, a } } ) ) \\
\geq \phi_a ( x) -\mathbf{E}_x ( \liminf_{ t \to \infty } \phi_a ( X_{t
\wedge \tau_{x, a } } ) ) = \phi_ a (x) ,
\end{multline*}
where we have used that $\liminf_{ t \to \infty } \phi_a ( X_{t \wedge
\tau_{x, a } } ) = \phi_a ( a) = 0 .$

As a consequence we have just shown the following

\begin{proposition}\label{prop:hitting}
Suppose that $\int^{+\infty }\pi (y)dy<\infty .$ Then $\mathbf{E}_{x}(\tau
_{x,a})=\phi _{a}(x)<\infty $ for all $0<a<x,$ where $\phi _{a}$ is given as
in \eqref{eq:phia}.
\end{proposition}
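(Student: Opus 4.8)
The plan is to use $\phi_a$ as a test function for the hitting time $\tau_{x,a}$ via Dynkin's formula, essentially assembling the estimates obtained in the discussion preceding the statement. First I would record the relevant properties of $\phi_a$: the hypothesis $\int^{+\infty}\pi(y)\,dy<\infty$ is exactly what makes $U(x)=e^{-\Gamma(x)}\int_x^{\infty}e^{\Gamma(y)}k(y)/\alpha(y)\,dy$ finite (recall that $e^{\Gamma(y)}k(y)/\alpha(y)$ is a multiple of $\pi(y)$, see \eqref{C5}), and hence what makes the function $\phi_a$ of \eqref{eq:phia} well defined; with the standing continuity and positivity assumptions on $\alpha,\beta,k$ on $(0,\infty)$, this $\phi_a$ is continuous, of class $C^1$ on $(a,\infty)$, non-negative, non-decreasing on $[a,\infty)$, finite on every bounded subinterval of $[a,\infty)$, satisfies $\phi_a(a)=0$, and solves $(\mathcal{G}\phi_a)(x)=-1$ on $(a,\infty)$.

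Next I would feed $\phi_a$ into Dynkin's formula along the process stopped at $\tau_{x,a}$ to obtain \eqref{eq:later}, that is, $\mathbf{E}_x(t\wedge\tau_{x,a})=\phi_a(x)-\mathbf{E}_x(\phi_a(X_{t\wedge\tau_{x,a}}))$ for all $t\geq 0$. From $\phi_a\geq 0$ this gives $\mathbf{E}_x(t\wedge\tau_{x,a})\leq\phi_a(x)$ for every $t$, and monotone convergence as $t\to\infty$ yields the upper bound $\mathbf{E}_x(\tau_{x,a})\leq\phi_a(x)<\infty$; in particular $\tau_{x,a}<\infty$ almost surely. For the matching lower bound I would return to \eqref{eq:later} and let $t\to\infty$ once more: since $\tau_{x,a}<\infty$ a.s. and the downward motion is continuous, $X_{t\wedge\tau_{x,a}}\to a$, hence $\phi_a(X_{t\wedge\tau_{x,a}})\to\phi_a(a)=0$ a.s.; Fatou's lemma applied to the non-negative quantities $\phi_a(X_{t\wedge\tau_{x,a}})$ then gives $\lim_{t}\mathbf{E}_x(\phi_a(X_{t\wedge\tau_{x,a}}))\leq\mathbf{E}_x(\liminf_t\phi_a(X_{t\wedge\tau_{x,a}}))=0$, so that $\mathbf{E}_x(\tau_{x,a})=\phi_a(x)-\lim_t\mathbf{E}_x(\phi_a(X_{t\wedge\tau_{x,a}}))\geq\phi_a(x)$. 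Combining the two bounds gives $\mathbf{E}_x(\tau_{x,a})=\phi_a(x)$.

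The main obstacle is the rigorous justification of \eqref{eq:later}, and of the limit exchange used in the lower bound: the process stopped at $\tau_{x,a}$ is bounded below by $a$ but not bounded above, since upward jumps can carry it arbitrarily high before it returns to $a$, so $\phi_a(X_{t\wedge\tau_{x,a}})$ need not be bounded and the compensated process is a priori only a local martingale. I would deal with this by localizing with the exit times $\tau_{x,b}=\inf\{t>0:X_t\geq b\}$ of large intervals: on $[0,t\wedge\tau_{x,a}\wedge\tau_{x,b}]$ the process stays in the compact set $[a,b]$, where $\phi_a$ and $\mathcal{G}\phi_a$ are bounded, so Dynkin's formula applies without difficulty and produces $\mathbf{E}_x(t\wedge\tau_{x,a}\wedge\tau_{x,b})=\phi_a(x)-\mathbf{E}_x(\phi_a(X_{t\wedge\tau_{x,a}\wedge\tau_{x,b}}))$; one then lets first $t\to\infty$ (bounded convergence in the $\phi_a$-term, monotone convergence on the left) and afterwards $b\to\infty$, invoking the a.s. finiteness $\tau_{x,a}<\infty$ produced by the upper-bound step to guarantee that the stopped process actually reaches $a$ rather than escaping to $+\infty$. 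All remaining steps are routine.
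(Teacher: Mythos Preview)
Your argument follows the paper's own line (the discussion that immediately precedes the proposition) essentially verbatim: derive $\phi_a$ from $\mathcal{G}\phi_a=-1$, apply Dynkin to obtain \eqref{eq:later}, get the upper bound $\mathbf{E}_x(\tau_{x,a})\le\phi_a(x)$ from $\phi_a\ge 0$ by monotone convergence, and then claim the matching lower bound via Fatou. The additional localization paragraph you supply is more careful than the paper about justifying \eqref{eq:later}.

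There is, however, a genuine gap in the lower-bound step, and it is the same gap present in the paper's argument. Fatou's lemma for non-negative random variables reads
\[
\mathbf{E}_x\Bigl(\liminf_{t\to\infty}\phi_a(X_{t\wedge\tau_{x,a}})\Bigr)\;\le\;\liminf_{t\to\infty}\mathbf{E}_x\bigl(\phi_a(X_{t\wedge\tau_{x,a}})\bigr),
\]
not the reverse inequality you (and the paper) write. From $\phi_a(X_{t\wedge\tau_{x,a}})\to 0$ a.s.\ Fatou therefore only yields the trivial $\liminf_t\mathbf{E}_x(\phi_a(X_{t\wedge\tau_{x,a}}))\ge 0$, which does \emph{not} give $\mathbf{E}_x(\tau_{x,a})\ge\phi_a(x)$. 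What is actually needed is $\mathbf{E}_x(\phi_a(X_{t\wedge\tau_{x,a}}))\to 0$, i.e.\ $L^1$-convergence, and that calls for a domination or uniform-integrability argument, not Fatou alone. A smaller issue in your localization paragraph: with $\tau_{x,b}=\inf\{t:X_t\ge b\}$ the process enters $[b,\infty)$ by an upward jump and may overshoot $b$ arbitrarily, so it is not true that $X$ stays in the compact set $[a,b]$ on $[0,t\wedge\tau_{x,a}\wedge\tau_{x,b}]$; at the terminal time one only has $X_{\tau_{x,b}}\ge b$ with no upper control, and hence $\phi_a(X_{t\wedge\tau_{x,a}\wedge\tau_{x,b}})$ is not bounded as claimed.
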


\begin{remark}
The last term $\int_{a}^{x}\frac{dy}{\alpha \left( y\right) }$ in the RHS of
the expression of $\phi _{a}\left( x\right) $ in \eqref{eq:phia} is the time
needed for the deterministic flow to first hit $a$ starting from $x>a,$
which is a lower bound of $\phi _{a}\left( x\right) $. Considering the tail
function of the speed density $\pi \left( y\right) $, namely $\overline{\pi }%
\left( y\right) :=$ $\int_{y}^{\infty }e^{\Gamma \left( z\right) }\frac{%
k\left( z\right) }{\alpha \left( z\right) }dz$, the first term in the RHS
expression of $\phi _{a}\left( x\right) $ is 
\begin{equation*}
\int_{a}^{x}-ds_{1}\left( y\right) \overline{\pi }\left( y\right) =-\left[
s_{1}\left( y\right) \overline{\pi }\left( y\right) \right]
_{a}^{x}-\int_{a}^{x}s_{1}\left( y\right) \pi \left( y\right) dy,
\end{equation*}
emphasizing the importance of the couple $\left( s_{1}\left( \cdot \right)
,\pi \left( \cdot \right) \right) $ in the evaluation of $\phi _{a}\left(
x\right) $. If $a$ is a small critical value below which the population can
be considered in danger, this is the mean value of a `quasi-extinction'
event when the initial size of the population was $x$.
\end{remark}

\begin{remark}
Notice that the above discussion is only possible for couples $0<a<x,
$ since starting from $x,$ $X_{t\wedge \tau _{x,a}}\geq a$ for all $t.$ A
similar argument does not hold true for $x<b$ and the study of $\tau _{x,b}.$
\end{remark}

\subsection{Mean first hitting time of $0$}
Suppose that $ \Gamma ( 0 ) > - \infty .$ Then for flows $x_{t}\left( x\right) $ that go extinct in finite time $%
t_0\left( x\right) $, under the condition that $\int^{+\infty }\pi (y)dy<\infty ,$ one can let $a\rightarrow 0$ in the expression of $%
\phi _{a}\left( x\right) $ to obtain 
\begin{equation*}
\phi _{0}\left( x\right) =\int_{0}^{x}dy\frac{\gamma \left( y\right) }{%
k\left( y\right) }e^{-\Gamma \left( y\right) }\int_{y}^{\infty }e^{\Gamma
\left( z\right) }\frac{k\left( z\right) }{\alpha \left( z\right) }%
dz+\int_{0}^{x}\frac{dy}{\alpha \left( y\right) },
\end{equation*}
which is the expected time to eventual extinction of $X$  starting
from $x, $  that is, $\phi _{0}\left( x\right) =\mathbf{E}\tau _{x,0} .$ 

The last term $\int_{0}^{x}\frac{dy}{\alpha \left( y\right) }=t_0\left(
x\right) <\infty $ in the RHS expression of $\phi _{0}\left( x\right) $ is
the time needed for the deterministic flow to first hit $0$ starting from $%
x>0,$ which is a lower bound of $\phi _{0}\left( x\right) $.

Notice that under the condition $ t_0 ( x) < \infty $ and $ k(0 ) < \infty, $   $\int_{0  }\pi (y)dy<\infty ,$ such that $ \pi$ can be tuned into a probability. It is easy to see that $ \Gamma ( 0 ) > - \infty $ implies then that
$ \phi_0 ( x) < \infty .$

\begin{example}(Linear release at constant jump rate):
Suppose $\alpha \left( x\right) =\alpha _{1}>0${\em , }$\beta \left(
x\right) =\beta _{1}>0${\em , }$\gamma \left( x\right) =\gamma _{1}=\beta
_{1}/\alpha _{1}${\em , }$\Gamma \left( x\right) =\gamma _{1}x${\em \ }with%
{\em \ }$\Gamma \left( 0\right) =0>-\infty $. Choose $k\left( x\right)
=e^{-x}.$

State $0$ is reached in finite time $t_{0}\left( x\right) =x/\alpha _{1},$ and it
turns out to be reflecting. We have $\int_{0}\pi \left( x\right)
dx<\infty $ and $\int^{\infty }\pi \left( x\right) dx<\infty $ if and only
if $\gamma _{1}<1.$ In such a case, the first integral term in the above
expression of $\phi _{0}\left( x\right) $ is $\gamma _{1}/\left[ \alpha
_{1}\left( 1-\gamma _{1}\right) \right] x$, so that $\phi _{0}\left(
x\right) =x/\left[ \alpha _{1}\left( 1-\gamma _{1}\right) \right] <\infty .$
\end{example}

\section{Non-explosion and Recurrence}
In this section we come back to the scale function $s  $ introduced in \eqref{eq:s2} above. Despite the fact that we cannot use $s $ to obtain explicit expressions for exit probabilities, we show how we might use it to obtain Foster-Lyapunov criteria in spirit of Meyn and Tweedie (1993) that imply the non-explosion of  the process together with its recurrence under additional irreducibility properties.
 
Let $ S_1 < S_2 < \ldots < S_n < \ldots $ be the successive jump times of the process and $ S_\infty = \lim_{n \to \infty } S_n.$ We start discussing how we can use the scale function $s$ to obtain a general criterion for non-explosion of the process, that is, $ S_\infty = + \infty $ almost surely.  

\begin{proposition}\label{prop:MT} 
Suppose $ \Gamma ( \infty ) = \infty $ and suppose that Assumption \ref{ass:C} holds. Suppose also that $ \beta $ is continuous on $ [0, \infty ) .$ 
Let  $ V $ be any $\mathcal{C}^1-$function defined on $ [0, \infty ), $ such that $ V(x) = 1+s(x)  $ on $ [1, \infty ) $ and such that $V (x) \geq 1/2 $ for all $x. $ Then
$V$ is a norm-like function in the sense of Meyn and Tweedie (1993), and we have
\begin{enumerate}
\item $\mathcal{G} V (x) =0, $ $\forall x\geq 1.$
\item $\sup_{x\in [0,1]} |\mathcal{G}V (x)| < \infty.$
\end{enumerate}
As a consequence,  $ S_{\infty} = \sup_n S_n = \infty $ almost surely,  so that $ X $ is non-explosive.
\end{proposition}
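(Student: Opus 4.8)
The plan is to verify the norm-like property of $V$ and the two displayed identities in turn, and then to run the standard Foster--Lyapunov non-explosion argument of Meyn and Tweedie (1993) on $V$. \emph{Norm-likeness.} First I would note that $V$ is continuous and that $V(x) = 1 + s(x) \to \infty$ as $x \to \infty$ by Assumption \ref{ass:C}; hence every sublevel set $\{V \le n\}$ is a compact subset of $[0,\infty)$, which together with $V \ge 1/2 > 0$ is exactly the norm-like property. (Such a $V$ does exist: on $[1,\infty)$ it is prescribed and $s \in C^1((0,\infty))$ there; on $[0,1]$ one interpolates in a $C^1$ fashion between the boundary data $V(1)=1$, $V'(1)=s'(1)=\gamma(1)/k(1)>0$ and an arbitrary value $\ge 1/2$ at $0$, keeping the graph above $1/2$ --- a genuine modification near $0$ being unavoidable since $s(0)=-\infty$.)

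\emph{Items (1) and (2).} For item (1): if $x \ge 1$ then $V'(y) = s'(y)$ for all $y \ge x$, so the nonlocal part of $\mathcal{G}V(x)$ only sees $s'$, and $\mathcal{G}V(x) = -\alpha(x)s'(x) + \frac{\beta(x)}{k(x)}\int_x^\infty k(y)s'(y)\,dy = (\mathcal{G}s)(x) = 0$ by Proposition \ref{prop:scale}. For item (2): fix $x \in [0,1]$ and split $\int_x^\infty k(y)V'(y)\,dy = \int_x^1 k(y)V'(y)\,dy + \int_1^\infty k(y)s'(y)\,dy$. The first term is an integral of continuous functions over the compact set $[x,1] \subset [0,1]$, hence bounded uniformly in $x$; for the second I use $k(y)s'(y) = \gamma(y)e^{-\Gamma(y)} = -\frac{d}{dy}e^{-\Gamma(y)}$ together with $\Gamma(1)=0$, $\Gamma(\infty)=\infty$ to get $\int_1^\infty k(y)s'(y)\,dy = e^{-\Gamma(1)} - e^{-\Gamma(\infty)} = 1$. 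Finally $\alpha$ and $V'$ are bounded on $[0,1]$ by continuity, $\beta$ is bounded on $[0,1]$ by the assumed continuity of $\beta$ on $[0,\infty)$, and $k(y) \ge k(1) > 0$ on $[0,1]$ because $k$ is non-increasing; so $\beta/k$ is bounded on $[0,1]$. Combining these yields $\sup_{x \in [0,1]}|\mathcal{G}V(x)| < \infty$.

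\emph{Non-explosion.} From (1) and (2) there is a finite constant $C$ with $\mathcal{G}V \le C$ on $[0,\infty)$; since $V \ge 1/2$, this gives the drift inequality $\mathcal{G}V(x) \le c\,V(x)$ for all $x$, with $c := 2\max(C,0)$, which is the hypothesis of the Meyn--Tweedie (1993) non-explosion criterion for the norm-like function $V$. Concretely I would localize at $\sigma_m := \inf\{t : X_t \ge m\}$ $(m > x)$, apply Dynkin's formula to the PDMP on $[0,\sigma_m \wedge t]$ (legitimate for $V \in C^1$, cf. Davis (1984)), and use $\mathcal{G}V \le C$ to obtain $\mathbf{E}_x[V(X_{t\wedge\sigma_m})] \le V(x) + \max(C,0)\,t$; Markov's inequality and norm-likeness then give $\mathbf{P}_x(\sigma_m \le t) \le (V(x) + \max(C,0)\,t)/\inf_{y\ge m}V(y) \to 0$ as $m \to \infty$. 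Hence $\sup_{s < t \wedge S_\infty} X_s < \infty$ a.s.\ for every $t$, i.e.\ $X$ cannot run off to $+\infty$ in finite time. On $\{S_\infty < \infty\}$ this forces $R := \sup_{s < S_\infty} X_s < \infty$, and then the jump intensity satisfies $\beta(X_{s-}) \le \sup_{[0,R]}\beta < \infty$ on $[0,S_\infty)$, so only finitely many jumps occur before $S_\infty$ --- contradicting that $S_\infty$ is their accumulation point. Therefore $\mathbf{P}_x(S_\infty < \infty) = 0$.

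\emph{Main obstacle.} The delicate point is precisely this last implication: the Lyapunov bound only rules out $X_t \to \infty$ in finite time, and to conclude $S_\infty = \infty$ one must also exclude an accumulation of jumps while $X$ stays bounded --- which is exactly where the continuity, and hence local boundedness (in particular near $0$), of $\beta$ on all of $[0,\infty)$ is used. The verification of Dynkin's formula and of the supermartingale property for the PDMP up to the localizing times $\sigma_m$ is routine (Davis (1984)) but should be carried out with the appropriate localization rather than quoted directly.
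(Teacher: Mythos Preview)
Your proof is correct and follows essentially the same route as the paper: verify norm-likeness from $s(\infty)=\infty$, get item~(1) immediately from $V'=s'$ on $[1,\infty)$, split the integral at~$1$ for item~(2), and then invoke the Meyn--Tweedie (CD0) criterion (which you spell out rather than merely cite). Two minor remarks: your direct evaluation $\int_1^\infty k(y)s'(y)\,dy=\int_1^\infty\gamma(y)e^{-\Gamma(y)}\,dy=1$ is cleaner than the paper's (which extracts this constant from the equation $\mathcal{G}s=0$ at $x=1$); on the other hand, your claim that $\int_x^1 k(y)V'(y)\,dy$ is ``an integral of continuous functions over a compact set, hence bounded uniformly in $x$'' tacitly assumes $k$ is bounded near $0$, which is not among the hypotheses --- the paper sidesteps this by writing that term as $\beta(x)\int_x^1 K(x,y)V'(y)\,dy$ with $K(x,y)=k(y)/k(x)\le 1$, which gives the uniform bound regardless of whether $k(0)<\infty$.
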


\begin{proof}
We check that $ V$ satisfies the condition (CD0) of \cite{MT}. It is evident that $ V$ is norm-like since $ \lim_{x \to \infty }V( x) = 1 + \lim_{x \to \infty }  s ( x) = 1 + s ( \infty ) = \infty $ since Assumption \ref{ass:C} holds. 
Moreover, since $K(x,y)= \frac{k(y)}{k(x)} $, 
$$ \mathcal{G} V (x) = -\alpha(x)V' (x)+\frac{\beta(x)}{k(x)}\int_x^{\infty}k(y) V' (y)dy. $$
Since for all $ 1 \le x \le y , $  $V' (x)= s'(x) $ and $ V' (y) = s ' (y) , $  we have $ \mathcal{G} V  =\mathcal{G}{s}=0 $ on $[1,\infty[.$

For the second point, for $ x \in ]0,1[, $
\begin{eqnarray*}
\mathcal{G}V(x) &=& -\alpha(x)V'(x)+\frac{\beta(x)}{k(x)}\int_x^{\infty}k(y)V'(y)dy \\
									   &=& -\alpha(x)V'(x)+\frac{\beta(x)}{k(x)}\int_1^{\infty}k(y)V'(y)dy + \beta(x)\int_x^1 K(x,y)V'(y)dy.
\end{eqnarray*}
$\alpha(x)V'(x) $ is continuous and thus bounded on $[0,1]. $ Moreover, for all  $y\geq x, $ $K(x,y) \leq 1 $  implying that   $\beta(x)\int_x^1 K(x,y)V'(y)dy \leq \beta(x)\int_x^1 V'(y)dy < \infty.$ We also have for all $y \geq 1, $  $$\int_1^{\infty}k(y)V'(y)dy=\int_1^{\infty}k(y)s'(y)dy .$$ Thus, using $\mathcal{G}V(x)=\mathcal{G}s(x)=0 $ on $]0,1[ $ we have $$\frac{\beta(x)}{k(x)} \int_1^{\infty}k(y)V'(y)dy= \frac{\beta(x)}{k(x)} k(1)V'(1)/\gamma(1) < \infty $$ because $\beta $ is continuous on $[0,1]$ and $k$ taking finite values on $ (0, \infty) .$ As a consequence, 
$\sup_{x\in [0,1]} |\mathcal{G}V(x)| < \infty. $
\end{proof}

We close this subsection with a stronger Foster-Lyapunov criterion implying the existence of finite hitting time moments. 

\begin{proposition}
Suppose there exist $ x_* > 0, $ $ c > 0 $ and a positive function $V$ such that $\mathcal{G}V (x)  \leq -c $ for all $ x \geq x_* .$ Then for all $ x \geq a \geq x_*,$  
$$\mathbf{E}(\tau_{x,a}) \leq \frac{V(x)}{c}.$$ 
\end{proposition}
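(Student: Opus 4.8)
The plan is to use a standard Foster--Lyapunov / Dynkin argument, exactly mirroring the structure already used for Proposition \ref{prop:hitting}. Fix $x \geq a \geq x_*$ and consider the stopped process $X_{t \wedge \tau_{x,a}}$. Since $X_{t \wedge \tau_{x,a}} \geq a \geq x_*$ for all $t$ (here we crucially use the asymmetry of the dynamics: the downward motion is continuous, so the process cannot cross below $a$ without hitting $a$), the hypothesis $\mathcal{G}V(y) \leq -c$ applies along the whole stopped trajectory. Dynkin's formula then gives, for every $t \geq 0$,
\begin{equation*}
\mathbf{E}_x\bigl(V(X_{t \wedge \tau_{x,a}})\bigr) = V(x) + \mathbf{E}_x\!\left(\int_0^{t \wedge \tau_{x,a}} \mathcal{G}V(X_s)\,ds\right) \leq V(x) - c\,\mathbf{E}_x(t \wedge \tau_{x,a}).
\end{equation*}

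Next I would use positivity of $V$: since $V \geq 0$, the left-hand side is nonnegative, so $c\,\mathbf{E}_x(t \wedge \tau_{x,a}) \leq V(x)$, i.e. $\mathbf{E}_x(t \wedge \tau_{x,a}) \leq V(x)/c$ for all $t$. Letting $t \to \infty$ and applying the monotone convergence theorem to $t \wedge \tau_{x,a} \uparrow \tau_{x,a}$ yields $\mathbf{E}_x(\tau_{x,a}) \leq V(x)/c < \infty$, which is the claim. In particular this shows $\tau_{x,a} < \infty$ almost surely as a byproduct.

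The main obstacle, and the only point requiring care, is the rigorous justification of Dynkin's formula in this PDMP setting: one must check that the local martingale $M_t = V(X_t) - V(x) - \int_0^t \mathcal{G}V(X_s)\,ds$ is a genuine martingale up to the stopping time $\tau_{x,a}$, or at least that optional stopping applies. This is where the stopping at $\tau_{x,a}$ (rather than at $\tau_{x,a} \wedge \tau_{x,b}$) is delicate, because $V = 1 + s$ need not be bounded on $[a,\infty)$ when $s(\infty) = \infty$. The standard remedy is to localize: introduce $\tau_b = \inf\{t : X_t \geq b\}$, apply the argument on $[0, \tau_{x,a} \wedge \tau_b \wedge t]$ where $M$ stopped is bounded, obtain $\mathbf{E}_x(t \wedge \tau_{x,a} \wedge \tau_b) \leq V(x)/c$, and then let $b \to \infty$ using $\tau_b \to \infty$ (which holds since $X$ is non-explosive under these hypotheses, cf. Proposition \ref{prop:MT}) followed by $t \to \infty$, in both cases by monotone convergence. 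Alternatively, one can invoke Theorem 11.3.4 of Meyn and Tweedie (1993) (the $f$-regularity / moment bound for hitting times) directly, since the hypothesis $\mathcal{G}V \leq -c$ on $\{x \geq x_*\}$ is exactly condition (CD3) with $f \equiv c$ and the relevant sublevel set playing the role of the small set; I would state the proof in the self-contained localized form above to keep the argument transparent.
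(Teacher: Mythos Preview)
Your proof is correct and follows exactly the same route as the paper's: apply Dynkin's formula to $V(X_{t\wedge\tau_{x,a}})$, use $\mathcal{G}V\le -c$ along the stopped trajectory together with $V\ge 0$ to get $\mathbf{E}_x(t\wedge\tau_{x,a})\le V(x)/c$, and let $t\to\infty$ by monotone convergence. Your additional care in justifying Dynkin's formula via a localization at level $b$ (and the remark that $X_{t\wedge\tau_{x,a}}\ge a$ by continuity of the downward motion) only makes explicit what the paper leaves implicit.
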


\begin{proof}
Using Dynkin's formula, we have for $ x \geq a \geq x_*, $ 
$$
\mathbf{E}(V(X_{t\wedge \tau_{x,a}}))= V(x)+ \mathbf{E}(\int_0^{t\wedge \tau_{x,a}} \mathcal{G}V(X_s) ds )
									   \leq V(x)-c \mathbf{E}(t\wedge \tau_{x,a} ) ,
$$
such that
$$ \mathbf{E}(t\wedge \tau_{x,a} )\leq \frac{V(x)}{c},$$ which implies the assertion, letting $t \to \infty .$ \end{proof}

\begin{example}
Suppose $\alpha (x)=1+x, $ $\beta (x)=x$ and $ 
k(x)=e^{-2x}.$ We choose $V(x)=e^{x} .$ Then
\begin{equation*}
\left( \mathcal{G}V\right) \left( x\right) =-e^{x}\leq -1,\forall x\geq 0.
\end{equation*}
As a consequence $\mathbf{E}(\tau _{x,a})<\infty ,$ for all $a>0.$ 
\end{example}

\subsection{Irreducibility and Harris recurrence}

In this section we  impose that  $ \Gamma ( \infty ) = \infty , $ such that non-trivial scale functions do exist. 
We also assume that Assumption \ref{ass:C} holds since otherwise the process is either transient at $ \infty $ or explodes in finite time. 
Then the function $ V $ introduced in Proposition \ref{prop:MT} is a Lyapunov function. 
This is {\it almost} the Harris recurrence of the process, all we need to show is some irreducibility property that we are going to check now. 

\begin{theorem}\label{theo:petite}
Suppose we are in the separable case, that $k \in C^1 $ and that $0$ is inaccessible, that is, $t_0 ( x) = \infty $ for all $x.$ Then every compact set $ C \subset ] 0, \infty [ $ is `petite` in the sense of Meyn and Tweedie. More precisely, there exist $ t > 0, $ $ \alpha \in (0, 1 ) $ and 
a probability measure $ \nu $ on $ ( \R_+ , {\mathcal{B} }( \R_+) ) ,$ such that 
$$ P_t (x, dy ) \geq \alpha \mathbf{1}_C (x) \nu (dy ) .$$ 
\end{theorem}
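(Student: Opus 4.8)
\textbf{Proof plan for Theorem \ref{theo:petite}.}

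The plan is to exploit the regularity produced by the absolutely continuous jump kernel $K(x,dy)$ to build a Doeblin-type lower bound on $P_t(x,dy)$ that is uniform over $x$ in a compact set $C \subset (0,\infty)$. Fix such a $C$ and write $C \subset [c_-, c_+]$ with $0 < c_- \le c_+ < \infty$. Because $0$ is inaccessible, starting from any $x \in C$ the process stays in $(0,\infty)$; moreover in a fixed short time the flow $x_t(x)$ stays in a compact subinterval of $(0,\infty)$, on which $\alpha$, $\beta$ and $k$ are bounded away from $0$ and $\infty$ by continuity. The strategy is: first, force exactly one jump to occur in a time window of a suitable length $t$; second, show that conditionally on that one jump the law of $X_t$ dominates (a constant times) a fixed probability measure $\nu$ with a Lebesgue density, uniformly in the starting point.

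First I would quantify the ``exactly one jump'' event. By the construction of the PDMP, the probability that no jump occurs before time $s$ starting from $x$ is $e^{-\int_0^s \beta(x_u(x))\,du}$, which by \eqref{eq:tx} equals $e^{-[\Gamma(x) - \Gamma(x_s(x))]}$; on the compact range of the flow this is bounded below by some $p_0 > 0$ uniformly in $x \in C$ and $s \le t$. The first jump time $S_1$ has, on $\{S_1 < t\}$, a density with respect to Lebesgue measure on $(0,t)$ bounded below by $\beta_{\min} e^{-\beta_{\max} t} =: \lambda_0 > 0$, where $\beta_{\min}, \beta_{\max}$ are the bounds for $\beta$ on the compact range of all the flows involved. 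At the jump time $S_1 = s$ the pre-jump position is $x_s(x)$, and the post-jump position $Y(x_s(x))$ has law $K(x_s(x), dy)$; since $k \in C^1$ and is finite and strictly decreasing on $(0,\infty)$, this law has density $-k'(y)/k(x_s(x))$ on $(x_s(x), \infty)$, and on any compact $y$-window this density is bounded below by a positive constant uniformly in the (compactly ranging) conditioning point. After the jump, on the event that no further jump occurs in the remaining time $t - s$, the process flows deterministically down from $Y(x_s(x))$, and the map carrying the post-jump position to $X_t$ is the flow, a $C^1$ diffeomorphism with derivative bounded above and below on the relevant compacts.

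Putting these pieces together by integrating over $s \in (0,t)$ and over the post-jump position, I would obtain, for a suitable choice of a small window $[a_0, b_0] \subset (0,\infty)$ hit by this scheme for every $x \in C$, a bound of the form $P_t(x, A) \ge \alpha \int_A \rho(y)\,dy$ for all measurable $A$ and all $x \in C$, where $\rho$ is a fixed positive continuous density supported on $[a_0,b_0]$ and $\alpha > 0$; setting $\nu(dy) = \rho(y)\,dy / \int \rho$ and absorbing $\int\rho$ into $\alpha$ gives $P_t(x,dy) \ge \alpha \mathbf{1}_C(x)\nu(dy)$ with $\alpha \in (0,1)$ (shrinking $\alpha$ if necessary), which is exactly the claimed petiteness (indeed a one-step minorization, hence petiteness). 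The main obstacle I expect is the bookkeeping needed to make every constant genuinely \emph{uniform} over $x \in C$: one must check that all the flows $x_u(x)$ for $x \in C$, $u \le t$, together with the post-jump flows, stay inside one fixed compact subinterval of $(0,\infty)$ on which $\alpha, \beta, k, k'$ are controlled --- this is where inaccessibility of $0$ (i.e. $t_0(x) = \infty$) is essential, since it guarantees the downward flow cannot leave $(0,\infty)$ --- and that the target window $[a_0,b_0]$ can be chosen once and for all, independently of the starting point. A secondary technical point is the change of variables in the $s$-integration versus the post-jump-position integration, which must be organized so that the resulting density in the final variable $y = X_t$ is genuinely bounded below, not merely positive; choosing $t$ small enough that the ``one jump, then free flow'' contribution dominates handles this.
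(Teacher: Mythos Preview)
Your plan is essentially the paper's own argument: force exactly one jump in a fixed time window, use the Lebesgue density $-k'(y)/k(x_s(x))$ of the separable kernel to produce a common lower-bounding measure, and flow deterministically for the remaining time; the paper organizes the ``exactly one jump, uniformly in $x\in C$'' step via a common driving Poisson measure and the event $E=\{M([0,t-\varepsilon]\times[0,\beta^*])=0,\ M([t-\varepsilon,t]\times[0,\beta_*])=1,\ M([t-\varepsilon,t]\times(\beta_*,\beta^*])=0\}$, whereas you do it by direct density bounds, but the content is the same.

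One point deserves more care: you assert that $k$ is \emph{strictly} decreasing so that $-k'(y)$ is bounded below on ``any compact $y$-window'', but the standing hypothesis is only that $k$ is non-increasing with $k\in C^1$, so $k'$ may vanish on intervals. The paper splits into two cases accordingly: if $|k'|>0$ everywhere one takes the target ball $B\subset[c_+,\infty)$ directly; otherwise one first locates a ball $B$ on which $|k'|>0$ (such a ball exists since $k>0$ on $(0,\infty)$ and $k(\infty)=0$) and then, if $\inf B<c_+$, takes $t$ \emph{large} enough that $x_{t-\varepsilon}(c_+)<\inf B$, so that the pre-jump position lies below $B$ for every $x\in C$. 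This last maneuver is in mild tension with your closing remark about taking $t$ small; in fact no smallness of $t$ is needed (the event ``exactly one jump in $[0,t]$'' has positive probability for every $t>0$), while largeness of $t$ may be needed to place $B$ correctly. With that adjustment your plan goes through.
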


\begin{proof}
Suppose w.l.o.g. that $ C = [a, b ] $ with $ 0 < a < b .$ Fix any $ t > 0 .$ The idea of our construction is to impose that all processes $ X_s ( x), a \le x \le b , $ have one single common jump during $ [0, t ].$ 
Indeed, notice that for each $ x \in  C, $ the jump rate of $ X_s ( x) $ is given by $ \beta ( x_s(x) ) $ taking values in a compact set $ [ \beta_* , \beta^* ] $ where $ \beta_* = \min \{ \beta ( x_s ( x ) ), 0 \le s \le t, x \in C \} $
and $ \beta^* = \max \{ \beta ( x_s ( x ) ), 0 \le s \le t, x \in C \} .$ Notice that $ 0 < \beta_* < \beta^* < \infty $ since $ \beta $ is supposed to be positive on $ (0, \infty ).$ We then construct all processes $ X_s (x) , s \le t, x \in C , $ using the same underlying Poisson random measure 
$ M .$ It thus suffices to impose that $E$ holds, where 
$$ E = \{ M ( [0, t - \varepsilon] \times  [ 0 , \beta^* ] ) = 0 , M ( [t- \varepsilon, t] \times [0, \beta_* ] ) = 1, M ( [t- \varepsilon, t] \times ] \beta_*, \beta^* ]   ) = 0\} .$$ 
Indeed, the above implies that up to time $ t - \varepsilon, $ none of the processes $ X_s ( x) , x \in C, $ jumps. The second and third assumption imply moreover that the unique jump time, call it $S,$ of 
$ M $ within $ [ t- \varepsilon, t ] \times [ 0 , \beta^* ] $ is a common jump of all processes. 
For each value of $x \in C,$ the associated process $ X (x) $ then  chooses  a new after-jump position $y$ according to 
\begin{equation}\label{eq:transition}
 \frac{1}{k( x_S (x)) }| k'(y)| dy \mathbf{1}_{ \{ y \geq x_S ( x) \} }.
\end{equation} 

{\bf Case 1.} Suppose $k$ is strictly decreasing, that is, $|k'| (y ) > 0 $ for all $y.$ Fix then any open ball $ B \subset [ b, \infty [ $ and notice that 
$\mathbf{1}_{ \{ y \geq x_S ( x) \} } \geq \mathbf{1}_B ( y ) ,$ since $ b \geq x_S (x) .$  Moreover, since $ k$ is decreasing, $ 1/ k (x_S (x) ) \geq 1/ k ( x_t (a) ).$ 
Therefore, the transition density given in \eqref{eq:transition} can be lower-bounded, independently of $x, $ by 
$$ \frac{1}{k( x_t (a)) } \mathbf{1}_B ( y ) | k'(y)| dy  = p \tilde  \nu ( dy ) , $$
where $\tilde  \nu ( dy ) = c  \mathbf{1}_B ( y ) | k'(y)| dy , $ normalized to be a probability density, and where $ p = \frac{1}{k( x_t (a) } / c .$ 
In other words, on the event $E , $ with probability $p, $ all particles choose a new and common position $ y \sim \tilde \nu ( dy ) $ and couple. 

{\bf Case 2.} $| k'| $ is different from $0$ on a ball  $ B $ (but not necessarily on the whole state space). We suppose w.l.o.g.  that $B$ has compact closure. Then it suffices to take $t$ sufficiently large in the first step such that  
$ x_{t- \varepsilon } (b) < \inf B .$ Indeed, this implies once more that $ \mathbf{1}_B ( y ) \le  \mathbf{1}_{ \{ y \geq x_s ( x) \}} $ for all $ x \in C $ and for $s$ the unique common jump time.

{\bf Conclusion.} In any of the above cases, let $ \bar b := \sup \{ x : x \in B \} < \infty $ and restrict the set $ E$ to 
$$ E' = E \cap \{ M ( [ t - \varepsilon ] \times ] \beta_* , \bar b ] = 0 \} .$$
Putting $ \alpha := p \, \Pr ( E' ) $ and 
$$ \nu (dy ) = \int_{ t _ \varepsilon }^t {\cal L} ( S | E' ) (ds) \int \tilde \nu  (dz) \delta_{ x_{ t- s} (z) } (dy )$$
then allows to conclude.  
\end{proof}

\begin{remark}
If $\beta $ is continuous on $ [0, \infty ) $ with $ \beta (0 ) > 0 $ and if moreover $k(0) < \infty, $ the above construction can be extended to any compact set of the form $ [0, b ], b < \infty $ and to the case where $ t_0 ( x) < \infty .$  
\end{remark}

As a consequence of the above considerations we obtain the following theorem.
\begin{theorem}\label{theo:harris}
Suppose that $ \beta ( 0 ) > 0, k ( 0) < \infty ,$ that $ \Gamma ( \infty ) = \infty  $ and moreover that Assumption \ref{ass:C} holds. Then the process is recurrent in the sense of Harris and its unique invariant measure is given by $ \pi.$ 
\end{theorem}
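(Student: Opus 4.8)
The plan is to combine the three ingredients established so far: the Foster–Lyapunov drift inequality coming from the scale function, the non-explosion statement, and the local Doeblin (petiteness) estimate, and then to invoke the standard Harris recurrence machinery of Meyn and Tweedie. More precisely, under the stated hypotheses ($\beta(0)>0$, $k(0)<\infty$, $\Gamma(\infty)=\infty$ and Assumption \ref{ass:C}) Proposition \ref{prop:MT} provides a norm-like $\mathcal{C}^1$-function $V$ with $V=1+s$ on $[1,\infty)$, satisfying $\mathcal{G}V(x)=0$ for $x\geq 1$ and $\sup_{x\in[0,1]}|\mathcal{G}V(x)|<\infty$; in particular the process is non-explosive, so $(X_t)_{t\geq 0}$ is a well-defined, everywhere-defined, non-terminating Markov process. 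Since $\beta(0)>0$ and $k(0)<\infty$, Proposition \ref{prop:extinction} (with Assumption \ref{As1}, which is implied here since $\Gamma(\infty)=\infty$ and $\gamma>0$ forces $\Gamma(0)=-\infty$ provided... — actually one should note carefully that $\beta(0)>0$ combined with continuity makes $\gamma$ bounded below near $0$ only if $\alpha(0)<\infty$; in any case $t_0(x)=\infty$ follows from Remark after Proposition \ref{prop:extinction} when $\beta(0)>0$ and $\Gamma(0)=-\infty$) guarantees that $0$ is inaccessible, so we may and do work on the state space $(0,\infty)$, which is invariant under the dynamics.

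First I would establish $\psi$-irreducibility and aperiodicity. By Theorem \ref{theo:petite} (using $k\in C^1$, which is part of the separability setup here together with the smoothness of $k$, and $t_0(x)=\infty$), every compact $C\subset(0,\infty)$ is petite, with a lower bound $P_t(x,dy)\geq \alpha\,\mathbf{1}_C(x)\,\nu(dy)$. Choosing $C$ to be a fixed compact set and noting that the reference measure $\nu$ constructed there has a density supported on an interval $[\inf B,\bar b]$ lifted along the flow, one checks that from any starting point $x>0$ the process reaches such a $C$ with positive probability in finite time (the deterministic flow carries $x$ downward toward $0$, and since $\beta>0$ on $(0,\infty)$ jumps keep the process from being absorbed), so the process is $\nu$-irreducible; aperiodicity follows because the petite-set estimate holds for a continuum of times $t$. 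Then I would invoke Theorem 3.2 / Theorem 4.2 of Meyn and Tweedie (1993) (the $T$-process / petite-set version, or the continuous-time CD0 criterion): the existence of a norm-like $V$ with $\mathcal{G}V\leq 0$ outside a petite set — here $\mathcal{G}V=0$ on $[1,\infty)$ and $|\mathcal{G}V|$ bounded on the compact (hence petite) set $[0,1]\cap(0,\infty)$, so the drift condition holds with the compact petite set $C_0$ playing the role of the exceptional set — together with non-explosion and $\psi$-irreducibility yields Harris recurrence of the process.

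Next I would identify the invariant measure. Harris recurrence gives existence and uniqueness (up to scalar multiples) of an invariant measure $\mu$. It remains to verify that the explicit candidate $\pi(dy)=\pi(y)\,dy$ with $\pi(y)=k(y)e^{\Gamma(y)}/\alpha(y)$ from \eqref{C5} is indeed invariant, i.e.\ that $\int_0^\infty (\mathcal{G}u)(x)\,\pi(x)\,dx=0$ for all smooth compactly supported $u$ on $(0,\infty)$. This is a direct computation: plug the generator \eqref{eq:generator} into the integral, use Fubini on the double integral $\int_0^\infty \pi(x)\frac{\beta(x)}{k(x)}\int_x^\infty k(y)u'(y)\,dy\,dx$, interchange, and check that the resulting expression cancels the drift term $-\int_0^\infty \alpha(x)u'(x)\pi(x)\,dx$ after an integration by parts; the identity $\alpha(x)\pi(x)=k(x)e^{\Gamma(x)}$ and $(e^{\Gamma})'=\gamma e^{\Gamma}$ make the cancellation work. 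Alternatively, and more economically, I would cite the DS-GC-duality (Proposition in Section 2.3) together with Eq.\ (19) of \cite{GHL}, where the speed measure of the dual growth-collapse process was already identified; pushing it forward under $x\mapsto 1/x$ gives exactly \eqref{C5}. Since the invariant measure is unique up to constants, $\mu=\pi$ (up to normalization), which is the second assertion.

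The main obstacle I anticipate is the irreducibility/aperiodicity bookkeeping rather than the drift condition or the invariant-measure computation, which are essentially routine given the earlier propositions. Specifically, Theorem \ref{theo:petite} as stated only asserts petiteness of compacts in the open half-line; to run the Meyn–Tweedie argument one needs a single petite set that the Lyapunov drift condition "points toward," and one must make sure that the process is genuinely irreducible with respect to a common measure — i.e.\ that from every $x>0$ one can reach the support of $\nu$. This requires combining the downward flow (which brings any $x$ into a fixed compact band in bounded time when $t_0(x)=\infty$, or one uses a jump to escape a neighborhood of $0$) with the positivity of $\beta$, and checking that the exceptional compact set $[0,1]\cap(0,\infty)$ on which $\mathcal{G}V$ fails to be $\leq 0$ is itself petite (which it is, by Theorem \ref{theo:petite}, possibly after the extension in the Remark following it to handle behaviour near $0$ using $\beta(0)>0$ and $k(0)<\infty$). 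Once these topological/irreducibility points are nailed down, Harris recurrence and the identification of $\pi$ follow immediately.
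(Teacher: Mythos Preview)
Your approach is essentially the paper's: verify condition (CD1) of Meyn and Tweedie via the Lyapunov function $V$ of Proposition~\ref{prop:MT} with compact set $C=[0,1]$, use Theorem~\ref{theo:petite} (together with the subsequent Remark, which is where $\beta(0)>0$ and $k(0)<\infty$ enter) to see that this compact is petite, and conclude by Theorem~3.2 of \cite{MT}; the identification of $\pi$ was already obtained in Section~2.6 via duality, exactly as in your ``alternative'' route. Your parenthetical worry about Assumption~\ref{As1} and inaccessibility of $0$ is both confused and unnecessary --- the hypotheses of the theorem do \emph{not} force $\Gamma(0)=-\infty$ (e.g.\ $\alpha,\beta$ both constant), and the paper simply works on $[0,\infty)$ throughout, which is precisely why $\beta(0)>0$ and $k(0)<\infty$ are assumed.
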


\begin{proof}
Condition (CD1) of Meyn and Tweedie (1993) holds with $ V$ given as in Proposition \ref{prop:MT} and with compact set $ C = [0, 1 ].$ By Theorem \ref{theo:petite}, all compact sets are `petite'. Then Theorem 3.2 of \cite{MT} allows to conclude. 
\end{proof}

\begin{example}
A meaningful recurrent example consists of choosing $\beta \left(x\right) =\beta _{1}/x,$ $\beta _{1}>0$ (the surge rate decreases like $1/x $), $\alpha \left( x\right) =\alpha _{1}/x $ (finite time extinction of $x_{t}$), $\alpha _{1}>0,$ and $k\left( y\right) =e^{-y}.$ In this case, all compact sets are `petite`. Moreover, with $\gamma _{1}=\beta _{1}/\alpha _{1}$, $\Gamma \left( x\right) =\gamma_{1}x$ and 
\begin{equation*}
\pi\left( y\right)  =\frac{y}{\alpha _{1}}e^{\left(\gamma_{1}-1\right)y}
\end{equation*} 
which can be tuned into a probability density if
 $\gamma _{1} < 1 . $ This also implies that Assumption \ref{ass:C} is satisfied such that $ s $ can be used to define a Lyapunov function.  The associated process $X$ is positive recurrent if $ \gamma_1 < 1 ,$ null-recurrent if $ \gamma_1 = 1.$
\end{example}

\section{The embedded chain}

In this section, we illustrate some of the previously established theoretical
results by simulations of the embedded chain that we are going to define now.
Defining $ T_1 = S_1, T_n = S_n - S_{n-1} , n \geq 2, $ the successive inter-jump waiting times, we have
\begin{eqnarray*}
\mathbf{P}\left( T_{n}\in dt,X_{S_{n}}\in dy\mid X_{S_{n-1}}=x\right)
&=&dt\beta \left( x_{t}\left( x\right) \right) e^{-\int_{0}^{t}\beta \left(
x_{s}\left( x\right) \right) ds}K\left( x_{t}\left( x\right) ,dy\right) \\
&=&dt\beta \left( x_{t}\left( x\right) \right) e^{-\int_{x_{t}\left(
x\right) }^{x}\gamma \left( z\right) dz}K\left( x_{t}\left( x\right)
,dy\right) .
\end{eqnarray*}
The embedded chain is then defined through  $Z_{n}:=X_{S_{n}}, n \geq 0.$ If $0$ is not absorbing, for all $
x\geq 0$ 
\begin{eqnarray*}
\mathbf{P}\left( Z_{n}\in dy\mid Z_{n-1}=x\right) &=&\int_{0}^{\infty }dt\beta \left( x_{t}\left( x\right) \right) e^{-\int_{x_{t}\left( x\right) }^{x}\gamma \left( z\right) dz}K\left( x_{t}\left( x\right) ,dy\right) \\
&=&e^{-\Gamma \left( x\right) }\int_{0}^{x}dz\gamma \left( z\right)
e^{\Gamma \left( z\right) }K\left( z,dy\right) ,
\end{eqnarray*}
where the last line is valid for $ x> 0 $ only, and only if $ t_0 ( x) = \infty .$ This implies that  
 $Z_{n}$ is a time-homogeneous discrete-time Markov chain on 
$\left[ 0,\infty \right) $\textbf{.}

\begin{remark}
\label{rem7} $\left( S_{n},Z_{n}\right) _{n\geq 0}$ is also a discrete-time
Markov chain on $\Bbb{R}_{+}^{2}$ with transition probabilities given by 
\begin{equation*}
\mathbf{P}\left( S_{n}\in dt\mid Z_{n-1}=x,S_{n-1}=s\right) =dt\beta \left(
x_{t-s}\left( x\right) \right) e^{-\int_{0}^{t-s}\beta \left( x_{s^{\prime
}}\left( x\right) \right) ds^{\prime }}\text{, }t\geq s,
\end{equation*}
and 
\begin{equation*}
\mathbf{P}\left( Z_{n}\in dy\mid Z_{n-1}=x,S_{n-1}=s\right)
=\int_{0}^{\infty }dt\beta \left( x_{t}\left( x\right) \right)
e^{-\int_{x_{t}\left( x\right) }^{x}\gamma \left( z\right) dz}K\left(
x_{t}\left( x\right) ,dy\right) 
\end{equation*}
independent of $s.$ Note
\begin{equation*}
\mathbf{P}\left( T_{n}\in d\tau \mid Z_{n-1}=x\right) =d\tau \beta \left(
x_{\tau }\left( x\right) \right) e^{-\int_{0}^{\tau }\beta \left(
x_{s}\left( x\right) \right) ds},\tau \geq 0 .
\end{equation*}
\end{remark}
Coming back to the marginal $Z_{n}$ and
assuming $\Gamma \left( 0\right) =-\infty ,$ the arguments of Sections 2.5 and 2.6 in \cite{GHL} imply that  
\begin{equation*}
\mathbf{P}\left( Z_{n}>y\mid Z_{n-1}=x\right) =e^{-\Gamma \left( x\right)
}\int_{0}^{x}dz\gamma \left( z\right) e^{\Gamma \left( z\right)
}\int_{y}^{\infty }K\left( z,dy^{\prime }\right)
\end{equation*}
\begin{equation}
=1-e^{\Gamma \left( x\wedge y\right) -\Gamma \left( x\right) }+e^{-\Gamma
\left( x\right) }\int_{0}^{x\wedge y}dz\gamma \left( z\right) e^{\Gamma
\left( z\right) }K\left( z,y\right) .  \label{S1}
\end{equation}
To obtain the last line, we have used that  $K\left( z,y\right) =1$\ for all $y\leq z$\ and, whenever $z<y$,
we have split the second integral in the first line into the two
parts corresponding to ($z<y\leq x$ and $z\leq x<y$). 


To simulate the embedded chain, we have to decide first if, given $Z_{n-1}=x$%
, the forthcoming move is down or up.

- A move up occurs with probability given by 
$ \mathbf{P}\left( Z_{n}> x\mid Z_{n-1}=x\right)=e^{-\Gamma \left( x\right)
}\int_{0}^{x}dz\gamma \left( z\right) e^{\Gamma \left( z\right) } K\left(
z,x\right) \mathbf{.}
$

- A move down occurs with complementary probability.

As soon as the type of move is fixed (down or up), to decide where the
process goes precisely, we must use the inverse of the corresponding
distribution function (\ref{S1}) (with $y\leq x$\ or $y>x$), conditioned on
the type of move.\newline

\begin{remark}
%
%
If state $0$\ is absorbing, Eq. (\ref{S1}) is valid only
when $x>0,$\ and the boundary condition $\mathbf{P}\left( Z_{n}=0\mid
Z_{n-1}=0\right) =1$\ should be added. 

\end{remark}

In the following simulations, as before,  we work in the separable case $K(x,y)=\frac{k(y)%
}{k(x)} ,$ where we choose $k(x)=e^{-x} $ in the first simulation and $k(x)=1/(1+x^2) $ in the second. 
Moreover, we take $\alpha(x)=\alpha_1 x^a$ and $%
\beta(x)=\beta_1 x^b $ with $\alpha_1=1, $ $a=2,$ $\beta_1=1$  and $b=1.$ In these cases, there is no finite time extinction of the process $x_t(x) ;$ that is, in both cases, state $0$ is not accessible.


\begin{center}
\includegraphics[scale=0.75]{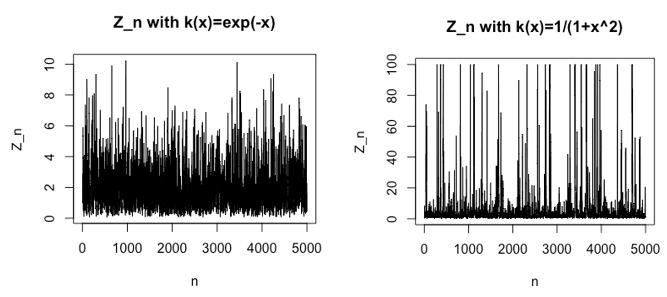}
\end{center}

Notice that in accordance with the fact that $k(x)=1/(1+x^2)$ has slower decaying tails than $k(x)= e^{-x},$ the process with jump distribution $k(x)=1/(1+x^2)$ has higher maxima than the process with  $k(x)= e^{-x}$ .

The graphs above do not provide any information about the jump
times. In what follows we take this additional information into account and simulate the values $Z_{n}$ of the embedded process as a function of the
jump times $S_{n} .$ To do so we must calculate the distribution $\mathbf{P}\left(
S_{n}\leq t\mid X_{S_{n-1}}=x,S_{n-1}=s\right) .$ Using Remark \ref{rem7} we have 
\begin{multline*}
\mathbf{P}\left( S_n\leq t \mid X_{S_{n-1}}=x, S_{n-1}=s\right) =
\int_{s}^{t}dt^{\prime}\beta \left( x_{t^{\prime}-s}\left( x\right) \right)
e^{-\int_{0}^{t^{\prime}-s}\beta \left( x_{s^{\prime }}\left( x\right)
\right) ds^{\prime }} \\
=\int_{0}^{t-s}du\beta \left( x_{u}\left( x\right) \right)
e^{-\int_{0}^{u}\beta \left( x_{s'}\left( x\right)
\right) ds'} 
= 1- e^{-\int_{0}^{t-s}\beta \left( x_{s'}\left( x\right) \right)
ds'} 
= 1- e^{-[\Gamma(x)-\Gamma(x_{t-s}(x))]}.
\end{multline*}

The simulation of the jump times $S_n $ then goes through a simple inversion
of the conditional distribution function $\mathbf{P}\left( S_n\leq t \mid
X_{S_{n-1}}=x, S_{n-1}=s\right) .$ In the following simulations we use the same parameters as in the
previous simulations.

\begin{center}
\includegraphics[scale=0.8]{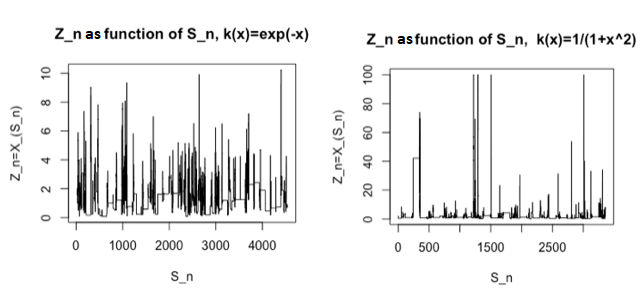}
\end{center}

The above graphs give the positions $Z_n$ as a function of the jump times $S_n$.
The waiting times between successive jumps are longer in the first process than in the second
one. Since we use the same jump rate function in both processes and since this rate is 
an increasing function of the positions, this is due to the fact that jumps lead to higher values in the second process than in the first such that jumps occur more frequently.

\begin{center}
\includegraphics[scale=0.7]{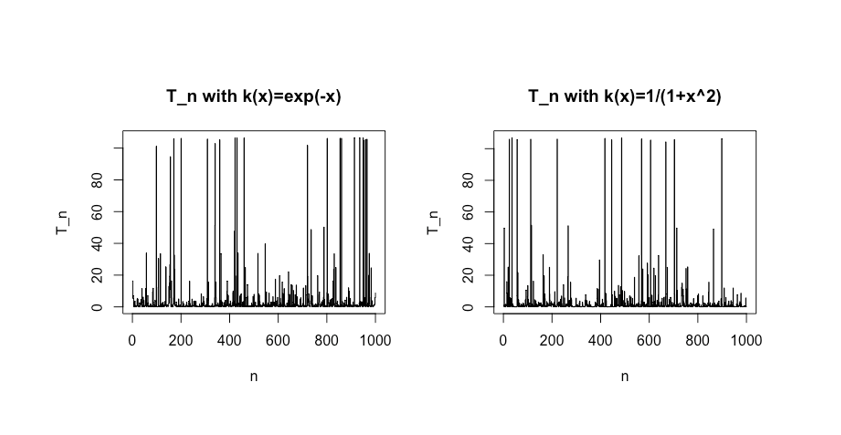}
\end{center}

These graphs represent the sequence $T_{n}=S_{n}-S_{n-1} $ of the
inter jump waiting times for the two processes showing once more that these waiting times are indeed
longer in the first process than in the second. 


\section{The extremal record chain}

Of interest are the upper record times and values sequences of $Z_{n}$,
namely 
\begin{eqnarray*}
R_{n} &=&\inf \left( r\geq 1:r>R_{n-1},Z_{r}>Z_{R_{n-1}}\right) \\
Z_{n}^{*} &=&Z_{R_{n}}.
\end{eqnarray*}
Unless $X$ (and so $Z_{n}$) goes extinct, $%
Z_{n}^{*}$ is a strictly increasing sequence tending to $\infty $%
\textbf{.}

Following \cite{Adke}, with $\left( R_{0}=0,Z_{0}^{*}=x\right) $,\ $\left(
R_{n},Z_{n}^{*}\right) _{n\geq 0}$\ clearly is a Markov chain with
transition probabilities for $y>x$%
\begin{eqnarray*}
\overline{P}^{*}\left( k,x,y\right) &:=&\mathbf{P}\left(
R_{n}=r+k,Z_{n}^{*}>y\mid R_{n-1}=r,Z_{n-1}^{*}=x\right) \\
&=&\overline{P}\left( x,y\right) \text{ if }k=1 \\
&=&\int_{0}^{x}...\int_{0}^{x}\prod_{l=0}^{k-2}P\left( x_{l},dx_{l+1}\right) 
\overline{P}\left( x_{k-1},y\right) \text{ if }k\geq 2,
\end{eqnarray*}
where $P\left( x,dy\right) =\mathbf{P}\left( Z_{n}\in dy\mid
Z_{n-1}=x\right) ,$\ $\overline{P}\left( x,y\right) =\mathbf{P}\left(
Z_{n}>y\mid Z_{n-1}=x\right) $ and $x_{0}=x.$

Clearly the marginal sequence $\left( Z_{n}^{*}\right) _{n\geq 0}$\ is
Markov with transition matrix 
\begin{equation*}
\overline{P}^{*}\left( x,y\right) :=\mathbf{P}\left( Z_{n}^{*}>y\mid
Z_{n-1}^{*}=x\right) =\sum_{k\geq 1}\overline{P}^{*}\left( k,x,y\right),
\end{equation*}
but the record times marginal sequence $\left( R_{n}\right) _{n\geq 0}$\ is
non-Markov. However 
\begin{equation*}
\mathbf{P}\left( R_{n}=r+k\mid R_{n-1}=r,Z_{n-1}^{*}=x\right) =\overline{P}%
^{*}\left( k,x,x\right) ,
\end{equation*}
showing that the law of $A_{n}:=R_{n}-R_{n-1}$\ (the age of the $n$-th
record) is independent of $R_{n-1}$ (although not of $Z_{n-1}^{*}$)$:$

\begin{equation*}
\mathbf{P}\left( A_{n}=k\mid Z_{n-1}^{*}=x\right) =\overline{P}^{*}\left(
k,x,x\right) ,\text{ }k\geq 1.
\end{equation*}

Of particular interest is $\left( R_{1},Z_{1}^{*}=Z_{R_{1}}\right) $, the
first upper record time and value because $S_{R_{1}}$ is the first time $(X_t)_t $\ exceeds the threshold $x,$ and $Z_{R_{1}}$\ the
corresponding overshoot at $y>x$. Its joint distribution is simply ($y>x$) 
\begin{equation*}
P^{*}\left( k,x,dy\right) =\mathbf{P}\left( R_{1}=k,Z_{1}^{*}\in dy\mid
R_{0}=0,Z_{0}^{*}=x\right)
\end{equation*}
\begin{eqnarray*}
&=&P\left( x,dy\right) \text{ if }k=1 \\
&=&\int_{0}^{x}...\int_{0}^{x}\prod_{l=0}^{k-2}P\left( x_{l},dx_{l+1}\right)
P\left( x_{k-1},dy\right) \text{ if }k\geq 2 .
\end{eqnarray*}
If $y_{c}>x$\ is a critical threshold above which one wishes to evaluate the
joint probability of $\left( R_{1},Z_{1}^{*}=Z_{R_{1}}\right) $, then $%
P^{*}\left( k,x,dy\right) /\overline{P}^{*}\left( k,x,y_{c}\right) $\textbf{%
\ }for $y>y_{c}$\ is its right expression.

Note that $\mathbf{P}\left( R_{1}=k\mid R_{0}=0,Z_{0}^{*}=x\right) =\overline{P}%
^{*}\left( k,x,x\right) $ and also that 
$$P^{*}\left( x,dy\right) :=\mathbf{P}\left(
Z_{1}^{*}\in dy\mid Z_{0}^{*}=x\right) =\sum_{k\geq 1}P^{*}\left(
k,x,dy\right) .$$

Of interest is also the number of records in the set $\left\{
0,...,N\right\} :$%
\begin{equation*}
\mathcal{R}_{N}:=\#\left\{ n\geq 0:R_{n}\leq N\right\} =\sum_{n\geq 0}%
\mathbf{1}_{\left\{ R_{n}\leq N\right\} } .
\end{equation*}

{\color{blue} }

The following graphs represent the records of the two processes as a function of the ranks of the records, the one at
the left with $k(x) = e^{-x}$ and the one at the right with $k(x) = 1/(1+x^2)$. We can notice that there are more records in the first graph than in the second.
Records occur more frequently in the first graph than in the second. On the other hand, the heights of the records are much lower in the first graph than in the second.

\begin{center}
\includegraphics[scale=0.65]{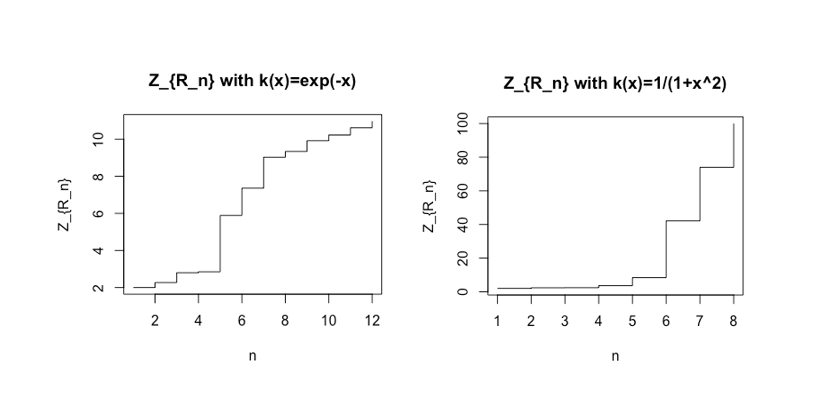}
\end{center}

The following graphs give $Z_{R_n} $ as a function of $R_n$. We remark that
the gap between two consecutive records decreases over the time whereas the
time between two consecutive records becomes longer. In other words, the higher
is a record, the longer it takes to surpass it statistically.

\begin{center}
\includegraphics[scale=0.65]{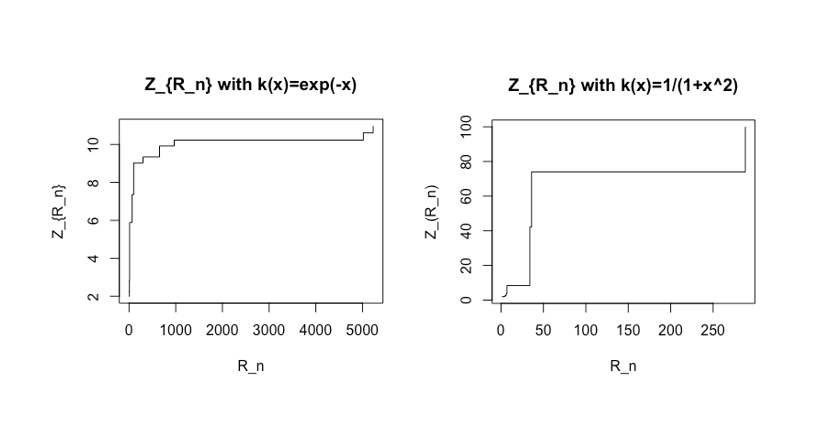}
\end{center}

The following graphs give  $A_n= R_n-R_{n-1} $ as a function of $n. $ The differences between two consecutive records are much greater in the first graph than in the second. It is also noted that the maximum time gap is reached between the penultimate record and the last record.

\begin{center}
\includegraphics[scale=0.65]{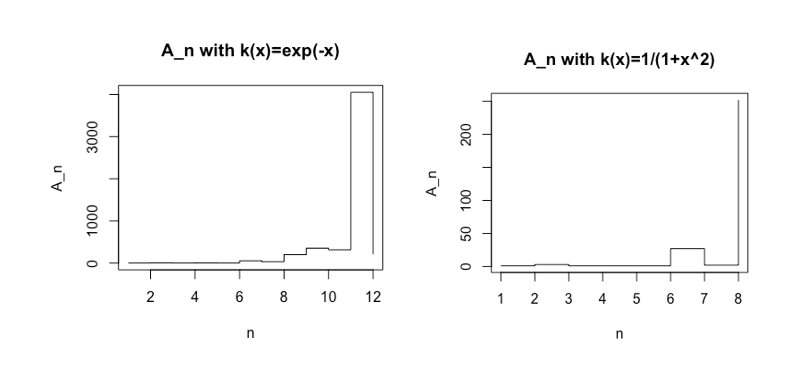}
\end{center}

The following graphs give the obtained records from the simulation of the
two processes, as a function of  time. We can remark that the curve is
slowly increasing with the time. In fact, to reach the $12$th record, the
first simulated process has needed $5500$ units of time and analogously, the
second simulated process has needed $1500$ units of time to reach its $ 8$th
record.

\begin{center}
\includegraphics[scale=0.7]{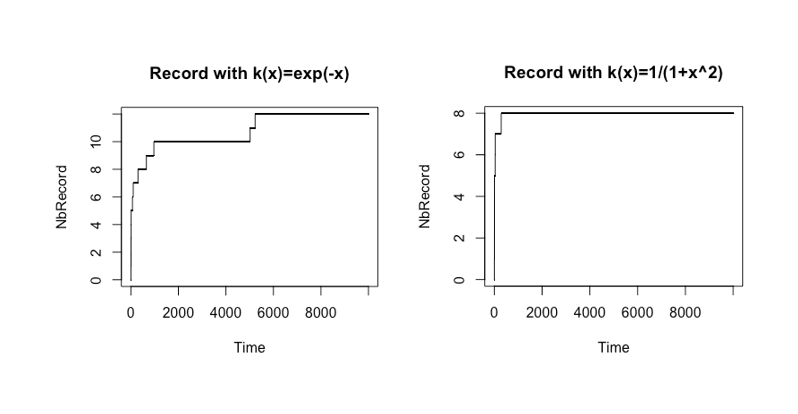}
\end{center}

\section{Decay-Surge processes and the relation with Hawkes  and Shot-noise processes}
\subsection{Hawkes processes}
In the section we study the particular case $\beta \left(
x\right) =\beta _{1}x,$ $\beta _{1}>0$ (the surge rate increases linearly
with $x$), $\alpha \left( x\right) =\alpha _{1}x$, $\alpha _{1}>0$
(exponentially declining population) and $k\left( y\right) =e^{-y}.$ In this
case, with $\gamma _{1}=\beta _{1}/\alpha _{1}$, $\Gamma \left( x\right)
=\gamma _{1}x.$ 

In this case, we remark $\Gamma (0)=0>-\infty .$ Therefore there is a
strictly positive probability that the process will never jump (in which
case it is attracted to $0$). However we have $t_{0}(x)=\infty, $  so the
process never hits $0$ in finite time. Finally, $\beta (0)=0$ implies
that state $0$ is natural (absorbing and inaccessible).

Note that for this model, 
\begin{equation*}
\pi \left( y\right) =\frac{1}{\alpha _{1}y}e^{\left( \gamma _{1}-1\right) y}
\end{equation*}
and we may take a version of the scale function given by
\begin{equation*}
s (x) =\frac{\gamma _{1}}{1-\gamma _{1}}[e^{-\left( \gamma _{1}-1\right) y}%
]_{0}^{x } = \frac{\gamma _{1}}{1-\gamma _{1}} \left( e^{\left( 1 - \gamma _{1} \right) x}- 1 \right) .
\end{equation*}
Clearly, Assumption \ref{ass:C} is satisfied if and only if $%
\gamma _{1}< 1. $ We call the case $ \gamma_1 < 1 $ {\it subcritical}, the case $ \gamma_1 > 1 $ {\it supercritical} and the case $ \gamma_1 = 1 $ {\it critical}.

{\bf Supercritical case.}  It can be shown that the process does not explode almost
surely, such that it is transient in this case (see Proposition  \ref
{cor:transience}). The speed density is neither integrable at $0$ nor at $%
\infty $.

\textbf{Critical and subcritical case.} If $\gamma_1 < 1, $ then $\pi $ is
integrable at $+\infty , $ and we find 
\begin{equation*}
\phi _{a}\left( x\right) =-\left[ s\left( y\right) \overline{%
\pi }\left( y\right) \right] _{a}^{x}-\int_{a}^{x}s \left( y\right) \pi
\left( y\right) dy+\frac{1}{\alpha _{1}}\log \frac{x}{a},
\end{equation*}
where 
\begin{equation*}
\int_{a}^{x}s \left( y\right) \pi \left( y\right) dy=\frac{\gamma _{1}}{%
\alpha _{1}\left( 1 - \gamma _{1} \right) }\left[ \ln \left( \frac{x}{a} \right) - \func{Ei}\left( \left( \gamma_{1}-1\right) x\right)  + \func{Ei}\left( \left( \gamma _{1} -1\right) a\right)
 \right] .
\end{equation*}
In the critical case $\gamma_1 = 1 $ the hitting
time of $a$ will be finite without having finite expectation.

In both critical and subcritical cases, that is, when $\gamma _{1}\leq 1, $
the process $X_t $ converges to $0$ as $t \to \infty $ as we shall show now.

Due to the additive structure of the underlying deterministic flow and the
exponential jump kernel, we have the explicit representation 
\begin{equation}  \label{eq:additive}
X_t = e^{ - \alpha_1 t } x + \sum_{ n \geq 1 : S_n \le t } e^{ - \alpha_1
(t- S_n ) } Y_n,
\end{equation}
where the $(Y_n)_{n \geq 1 } $ are i.i.d. exponentially distributed random
variables with mean $1,$ such that for all $n, $ $Y_n $ is independent of $%
S_k, k \le n , $ and of $Y_k, k < n.$ Finally, in \eqref{eq:additive}, the
process $X_t$ jumps at rate $\beta_1 X_{t- } .$

The above system is a \textit{linear Hawkes process} without immigration,
with kernel function $h( t) = e^{ - \alpha_1 t } $ and with random jump
heights $(Y_n)_{n \geq 1 } $ (see \cite{Hawkes}, see also \cite{bm}). Such a
Hawkes process can be interpreted as \textit{inhomogeneous Poisson process
with branching}. Indeed, the additive structure in \eqref{eq:additive}
suggests the following construction.

\begin{itemize}
\item  At time $0,$ we start with a Poisson process having time-dependent
rate $\beta _{1}e^{-\alpha _{1}t}x.$

\item  At each jump time $S$ of this process, a new (time inhomogeneous)
Poisson process is born and added to the existing one. This new process has
intensity $\beta _{1}e^{-\alpha _{1}(t-S)}Y,$ where $Y$ is exponentially
distributed with parameter $1,$ independent of what has happened before. We
call the jumps of this newborn Poisson process \textit{jumps of generation $1
$}.

\item  At each jump time of generation $1,$ another time inhomogeneous
Poisson process is born, of the same type, independently of anything else
that has happened before. This gives rise to jumps of generation $2.$

\item  The above procedure is iterated until it eventually stops since the
remaining Poisson processes do not jump any more.
\end{itemize}

The total number of jumps of any of the offspring Poisson processes is given
by 
\begin{equation*}
\beta_1 \mathbf{E } (Y) \int_{S}^\infty e^{ - \alpha_1 (t- S ) } dt =
\gamma_1.
\end{equation*}
So we see that whenever $\gamma_1 \le 1 , $ we are considering a subcritical
or critical Galton-Watson process which goes extinct almost surely, after a
finite number of reproduction events. This extinction event is equivalent to
the fact that the total number of jumps in the system is finite almost
surely, such that after the last jump, $X_t$ just converges to $0 $
(without, however, ever reaching it). Notice that in the  subcritical case $%
\gamma_1 < 1 ,$ the speed density is integrable at $ \infty, $ while it is
not at $0$ corresponding to absorption in $0.$

An interesting feature of this model is that it can exhibit a phase
transition when $\gamma _{1}$ crosses the value $1$.

Finally, in the case of a linear Hawkes process with immigration we have $%
\beta ( x) = \mu + \beta_1 x , $ with $\mu > 0.$ In this case, 
\begin{equation*}
\pi( y) =\frac{1}{\alpha _{1}}e^{\left( \gamma _{1}-1\right) y } y^{ \mu/
\alpha_1 - 1 }
\end{equation*}
which is always integrable in $0$ and which can be tuned into a probability
in the subcritical case $\gamma_1 < 1$ corresponding to positive recurrence.

\begin{remark}
An interpretation of the decay-surge process in terms of Hawkes processes is only possible in case of affine jump rate functions $ \beta, $ additive drift $ \alpha $ and exponential kernels $k$ as considered above. 
\end{remark}

\subsection{Shot-noise processes}
Let $h\left( t\right) $, $t\geq 0,$ with $h\left( 0\right) =1$ be a causal
non-negative non-increasing response function translating the way shocks
will attenuate as time passes by in a shot-noise process. We assume $h\left(
t\right) \rightarrow 0$ as $t\rightarrow \infty $ and 
\begin{equation}
\int_{0}^{\infty }h\left( s\right) ds<\infty .  \label{IF}
\end{equation}
With $X_{0}=x\geq 0$, consider then the linear shot-noise process 
\begin{equation}
X_{t}=x+\int_{0}^{t}\int_{{\Bbb R}_{+}}yh\left( t-s\right) \mu \left(
ds,dy\right) ,  \label{LSN}
\end{equation}
where, with $\left( S_{n};n\geq 1\right) $ the points of a homogeneous
Poisson point process with intensity $\beta ,$ $\mu \left( ds,dy\right)
=\sum_{n\geq 1}\delta _{S_{n}}\left( ds\right) \delta _{\Delta _{n}}\left(
dy\right) $ (translating independence of the shots' heights $\Delta _{n}$
and occurrence times $S_{n}$)$.$ Note that, with $dN_{s}=\sum_{n\geq
1}\Delta _{n}\delta _{S_{n}}\left( ds\right) ,$ so with $N_{t}=\sum_{n\geq
1}\Delta _{n}{\bf 1}_{\{  S_{n}\leq t \}}  $ representing a
time-homogeneous compound Poisson process with jumps' amplitudes $\Delta ,$%
\begin{equation}
X_{t}=x+\int_{0}^{t}h\left( t-s\right) dN_{s}  \label{LFSN}
\end{equation}
is a linearly filtered compound Poisson process. Under this form, it is
clear that $X_{t}$ cannot be Markov unless $h\left( t\right) =e^{-\alpha t}$%
, $\alpha >0$. We define
\begin{eqnarray*}
\nu \left( dt,dy\right)  &=&{\bf P}\left( S_{n}\in dt,\Delta _{n}\in dy\text{
for some }n\geq 1\right)  \\
&=&\beta dt\cdot {\bf P}\left( \Delta \in dy\right) .
\end{eqnarray*}
In the sequel, we shall assume without much loss of generality that $x=0$.

The linear shot-noise process $X_{t}$ has two alternative equivalent
representations, emphasizing its {\bf superposition} characteristics:

\begin{eqnarray*}
\left( 1\right) \text{ }X_{t} &=&\sum_{n\geq 1}\Delta _{n}h\left(
t-S_{n}\right) {\bf 1}_{\{ S_{n}\leq t\}} \\
\left( 2\right) \text{ }X_{t} &=&\sum_{p=1}^{P_{t}}\Delta _{p}h\left( t-%
{\cal S}_{p}\left( t\right) \right) ,
\end{eqnarray*}
where $P_t = \sum_{n\geq
1} {\bf 1}_{\{ S_{n}\leq t\}}.$

Both show that $X_{t}$ is the size at $t$ of the whole decay-surge
population, summing up all the declining contributions of the sub-families
which appeared in the past at jump times (a shot-noise or filtered Poisson
process model appearing also in Physics and Queuing theory, \cite{Sny}, \cite
{Parzen}). The contributions $\Delta _{p}h\left( t-{\cal S}_{p}\left(
t\right) \right) $, $p=1, ...P_{t},$ of the $P_{t}$ families to $X_{t}$ are
stochastically ordered in decreasing sizes.\newline

In the Markov case, $h\left( t\right) =e^{-\alpha t}$, $t\geq 0$, $\alpha >0,
$ we have
\begin{equation}
X_{t}=e^{-\alpha t}\int_{0}^{t}e^{\alpha s}dN_{s},  \label{MSN}
\end{equation}
so that 
\[
dX_{t}=-\alpha X_{t}dt+dN_{t},
\]
showing that $X_{t}$ is a time-homogeneous Markov process driven by $N_{t},$
known as the {\bf classical} linear shot-noise. This is clearly the only
choice of the response function that makes $X_{t}$ Markov. In that case, by
Campbell's formula (see \cite{Parzen}),

\begin{eqnarray*}
\Phi _{t}^{X}\left( q\right)  &:&={\bf E}e^{-qX_{t}}=e^{-\beta
\int_{0}^{t}\left( 1-\phi _{\Delta }\left( qe^{-\alpha \left( t-s\right)
}\right) \right) ds} \\
&=&e^{-\frac{\beta }{\alpha }\int_{e^{-\alpha t}}^{1}\frac{1-\phi _{\Delta
}\left( qu\right) }{u}du}\text{ where }e^{-\alpha s}=u,
\end{eqnarray*}
with 
\[
\Phi _{t}^{X}\left( q\right) \rightarrow \Phi _{\infty }^{X}\left( q\right)
=e^{-\frac{\beta }{\alpha }\int_{0}^{1}\frac{1-\phi _{\Delta }\left(
qu\right) }{u}du}\text{ as }t\rightarrow \infty .
\]

The simplest explicit case is when $\phi _{\Delta }\left( q\right) =1/\left(
1+q/\theta \right) $ (else $\Delta \sim $Exp$\left( \theta \right) $) so
that, with $\gamma =\beta /\alpha ,$ 
\[
\Phi _{\infty }^{X}\left( q\right) =\left( 1+q/\theta \right) ^{-\gamma } 
\]
the Laplace-Stieltjes-Transform of a Gamma$\left( \gamma ,\theta \right) $
distributed random variable $X_{\infty },$ with density 
\[
\frac{\theta ^{\gamma }}{\Gamma \left( \gamma \right) }x^{\gamma
-1}e^{-\theta x}\text{, }x>0. 
\]
This time-homogeneous linear shot-noise with exponential attenuation
function and exponentially distributed jumps is a decay-surge Markov process
with triple 
\[
\left( \alpha \left( x\right) =-\alpha x;\text{ }\beta \left( x\right)
=\beta ;\text{ }k\left( x\right) =e^{-\theta x}\right) . 
\]

Shot-noise processes being generically non-Markov, there is no systematic
relationship of decay-surge Markov processes with shot-noise processes. In 
\cite{EK2009}, it is pointed out that decay-surge Markov processes could be
related to the maximal process of nonlinear shot noise; see Eliazar and Klafter (2007 and 2009).

\textbf{Acknowledgments.}

T. Huillet acknowledges partial support from the ``Chaire \textit{%
Mod\'{e}lisation math\'{e}matique et biodiversit\'{e}''.} B. Goncalves and
T. Huillet acknowledge support from the labex MME-DII Center of Excellence (%
\textit{Mod\`{e}les math\'{e}matiques et \'{e}conomiques de la dynamique, de
l'incertitude et des interactions}, ANR-11-LABX-0023-01 project). This work
was funded by CY Initiative of Excellence (grant ``\textit{Investissements
d'Avenir}''ANR- 16-IDEX-0008), Project ''EcoDep'' PSI-AAP2020-0000000013.

\end{document}